\documentclass[reqno, 12pt]{amsart}
\usepackage{amsmath,amssymb,tikz,stix2}
\usepackage[colorlinks=true,
linkcolor=red,
citecolor=green,
urlcolor=green]{hyperref}

\newtheorem{thm}{Theorem}
\newtheorem{lem}[thm]{Lemma}
\newtheorem{cor}[thm]{Corollary}
\newtheorem{prop}[thm]{Proposition}

\theoremstyle{definition}
\newtheorem{example}[thm]{Example}

\theoremstyle{remark}
\newtheorem{remark}[thm]{Remark}

\newcommand{\D}{\mathbb D}
\newcommand{\T}{\mathbb T}

\newcommand{\E}{\mathcal E}
\newcommand{\HH}{\mathcal H}

\DeclareMathOperator{\Hol}{Hol}
\DeclareMathOperator{\ord}{ord}

\begin{document}
	
	\title[Beurling-type theorem for powers of Bergman shifts]
	{The Beurling-type theorem for powers of weighted Bergman shifts}
	
	\author{Shibo Xu}
	\address[Shibo Xu]{
		School of Mathematical Sciences, Dalian University of Technology,
		Dalian, Liaoning 116024, P. R. China}
	\email{shiboxu98@163.com}
	
		\author{Tao Yu}
	\address[Tao Yu]{
		School of Mathematical Sciences, Dalian University of Technology,
		Dalian, Liaoning 116024, P. R. China}
	\email{tyu@dlut.edu.cn}
	
	\thanks{Yu is supported by the National Natural Science Foundation
		of China (Grant No.~ 12571132).
	}

	\subjclass[2020]{
		Primary 47A15; Secondary 30H20, 46E22}
	
	\keywords{
		Wandering subspace property, weighted Bergman shifts,
		powers of operators, reproducing kernels, finite Blaschke products}
	
	\begin{abstract}
		Let $\mathcal{B}_{\alpha}$ denote the shift operator on the weighted Bergman space
		$A^2_\alpha(\mathbb{D})$.
	    We prove that, for every integer $k\geq1$, the Beurling-type theorem holds for $\mathcal{B}_{\alpha}^k$ on $A^2_\alpha(\mathbb{D})$
		if and only if $-1<\alpha\leq1$. We also show that, for every $\alpha>-1$,
		there exists a finite Blaschke product $B$ such that $M_{B}$ admits a cyclic
		invariant subspace but fails the wandering subspace property.
	\end{abstract}
	
	\maketitle

	\section{Introduction}
	Let $T$ be a bounded linear operator on a Hilbert space $\mathcal H$, a subspace of a Hilbert space is called a wandering subspace for $T$ if it is orthogonal to its images under positive powers of the operator $T$.
	If	$\mathcal M$ be a closed $T$-invariant subspace, then clearly 
	$
	\mathcal M\ominus T\mathcal M
	$
	is wandering for $T$, and we refer to this subspace as the wandering subspace of $\mathcal M$.
	For $\mathcal E\subset\mathcal H$, let $ [\mathcal E]_T$ denote the $T$-invariant subspace generated by $\mathcal E$, i.e.
	\[
	[\mathcal E]_T
	=
	\overline{\operatorname{span}}
	\{T^n f:n\ge0,\ f\in\mathcal E\}.
	\]
	We say that $\mathcal M$ has the wandering subspace property if
	\begin{equation}\label{wsp}
		\mathcal M=[\mathcal M\ominus T\mathcal M]_T.
	\end{equation}
	The Beurling-type theorem is said to hold for $T$ on $\mathcal H$ if
	\eqref{wsp} holds for every closed $T$-invariant subspace $\mathcal M$.
	
	A canonical example is the Hardy shift
	$M_z$ on $H^2(\mathbb D)$. Beurling’s theorem \cite{B1948} identifies every nonzero closed invariant subspace as $\theta H^2(\mathbb D)$ for some inner function $\theta$, with the one dimensional wandering subspace $\mathbb C\theta$.
	 For pure isometries $T$ acting on a Hilbert space $\mathcal H$, the classical Wold--Kolmogorov decomposition states that the operator is	unitarily equivalent to a unilateral shift and the Beurling-type theorem is hold for $T$. 
	Beyond the isometric setting, Shimorin \cite{Shimorin2001} extended the Wold--Kolmogorov decomposition
	to certain classes of operators close to isometries, including concave operators.
	He showed that such operators admit a decomposition into
	a unitary part and an invariant subspace generated by
	$\mathcal H\ominus T\mathcal H$.
	In particular, analytic operators in these classes
	possess the wandering subspace property.

	Let $dA(z)=\pi^{-1}\,dx\,dy$ denote the normalized Lebesgue area measure on the open unit disk  $\mathbb{D}=\{z\in \mathbb{C}: |z|<1\}$. For $\alpha>-1$, the Bergman space $A^2_\alpha(\D)$ consists of all functions on $\D$ satisfying 
	\[
   A^2_\alpha(\D)
	=\bigl\{
	f\in\Hol(\D):\|f\|_\alpha<\infty
	\bigr\},
	\]
	where
	\[
	\|f\|_\alpha^2
	=(\alpha+1)\int_\D
	|f(z)|^2(1-|z|^2)^\alpha\,dA(z),
	\]
	and $\Hol(\D)$ denote the space of holomorphic functions on $\D$.

	Let $\mathcal{B}_{\alpha}$ denote the shift operator on $A^2_\alpha(\D)$.
	Unlike the Hardy shift, $\mathcal{B}_{\alpha}$ is not an isometry, hence the classical Wold--Kolmogorov decomposition does not apply directly. Nevertheless, Aleman, Richter, and Sundberg \cite{ARS1996}
	proved that the Beurling-type theorem holds for the shift
	$\mathcal{B}_{0}$ on the unweighted Bergman space $A^2_0(\D)$. Shimorin \cite{Shimorin2003} extended this result to $A^2_\alpha(\D)$ for $-1<\alpha\leq1$. More recently, Lin, Xu, and Yu \cite{LinXuYu2026}
	proved that the range $-1<\alpha\leq1$ is sharp.
	
Motivated by the work of Lin, Xu, and Yu
\cite{LinXuYu2026}, we investigate whether
	the range $-1<\alpha\leq1$ remains sharp
for the powers $\mathcal{B}_{\alpha}^k$, $k\geq2$.
This question does not follow directly from
the case $k=1$, since an $\mathcal{B}_{\alpha}^k$-invariant
subspace need not be $\mathcal{B}_{\alpha}$-invariant.
		Seco \cite{S2019} constructed $z^k$-invariant
		subspaces that fails the wandering subspace property
		in the Dirichlet-type spaces.
Related developments concerning Bergman-type
reproducing kernels and wandering subspace
theorems can be found in
\cite{McCulloughRichter2002,Olofsson2005}.
	
We now state our main result, which gives
a complete answer to this question.
	
	\begin{thm}
		\label{t1}
		Let $\mathcal{B}_{\alpha}$ be the shift operator on the weighted
		Bergman space $A^2_\alpha(\D)$, where $\alpha>-1$.
		Then, for every integer $k\geq1$,
	   the Beurling-type theorem holds for $\mathcal{B}_{\alpha}^k$
		if and only if $-1<\alpha\leq1$.
	\end{thm}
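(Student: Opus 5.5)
The plan splits into the two directions: a sufficiency argument for $-1<\alpha\le1$ and a counterexample for $\alpha>1$.

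\textbf{Sufficiency.} For $-1<\alpha\le 1$ I would use Shimorin's criterion \cite{Shimorin2001}: an analytic operator $T$ satisfying the concavity-type inequality
\[
\|Tx+y\|^2\le 2\bigl(\|x\|^2+\|Ty\|^2\bigr)
\]
has the wandering subspace property on every invariant subspace. Restricting $T$ to an invariant subspace preserves both the inequality and analyticity. The task is therefore to verify this inequality for $T=\mathcal B_\alpha^k$ on $A^2_\alpha(\D)$ for $-1<\alpha\le1$, and to check analyticity, which is immediate since $\bigcap_n z^{kn}A^2_\alpha=\{0\}$.

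Shimorin's approach for $k=1$ \cite{Shimorin2003} passes through the reproducing kernel $(1-\bar w z)^{-(2+\alpha)}$. The relevant positivity condition is that $1/k_\alpha$ has the complete Nevanlinna--Pick/ Bergman-type structure, equivalently that the kernel reciprocal has the right sign pattern, and this holds exactly when $\alpha\le 1$.

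For powers I would decompose $A^2_\alpha=\bigoplus_{j=0}^{k-1} z^j A_j$, where $A_j$ consists of functions of $z^k$. On each summand, $\mathcal B_\alpha^k$ acts as a weighted shift with weights $\omega_n=\|z^{kn+j}\|^2$, given by
\[
\|z^m\|^2=\frac{m!\,\Gamma(\alpha+2)}{\Gamma(m+\alpha+2)}.
\]
Since $\mathcal B_\alpha^k$ is a direct sum of weighted shifts, the inequality can be checked coefficientwise. It reduces to a statement about the moment sequence $\omega_n$: the sequence $1/\omega_n$ must satisfy
\[
\frac{1}{\omega_{n+2}}-\frac{2}{\omega_{n+1}}+\frac{1}{\omega_n}\le 0
\]
(concavity in $n$, appropriately normalized), or equivalently the inequality $\|T^2x\|^2+\|x\|^2\le 2\|Tx\|^2$ along each orbit.

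With $1/\|z^m\|^2=\Gamma(m+\alpha+2)/(m!\,\Gamma(\alpha+2))$, which behaves like a polynomial of degree $\alpha+1\le2$ in $m$, one checks concavity of $n\mapsto 1/\|z^{kn+j}\|^2$ in the sense needed. This is the second difference of a function of $m$ whose second derivative has constant sign when $\alpha+1\le 2$. I expect the correct form to be the 2-concavity inequality above, which Shimorin showed suffices for weighted shifts.

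\textbf{Necessity.} For $\alpha>1$ the claim is that the theorem fails for every $k$. Here I would adapt the Lin--Xu--Yu counterexample \cite{LinXuYu2026}. They produce a $z$-invariant subspace $\mathcal M\subset A^2_\alpha$, say generated by finitely many kernel-type functions or a zero-based subspace, with $\mathcal M\ne[\mathcal M\ominus z\mathcal M]$. To transfer it to $z^k$, I would use the subspace $A_0$ of functions of $z^k$. The map $f\mapsto f(z^k)$ intertwines $\mathcal B_\beta$-type weighted shifts with $\mathcal B_\alpha^k|_{A_0}$, but the weights are not those of a standard weighted Bergman space, so a direct transfer is not available.

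A better route is a direct construction. Take $\mathcal M=[f]_{z^k}$ or a two-generator subspace $[\E]_{z^k}$ built from reproducing kernels at the $k$-th roots, and show $\mathcal M\ominus z^k\mathcal M$ fails to generate $\mathcal M$. I would follow the Hedenmalm-type method: compute the extremal function of $\mathcal M$ and show that the generated subspace misses something.

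This step is the main obstacle. The cleanest plan is to replicate the $k=1$ counterexample inside the summand $z^jA_j$ with the weight sequence $\omega_n=\|z^{kn+j}\|^2$. The failure for $k=1$ rests on an asymptotic property of the weights, namely that $1/\omega_n$ grows like $n^{\alpha+1}$ with $\alpha+1>2$, which persists for the subsequence $kn+j$. Concretely, I would show that the weighted shift on $A_0$ is unitarily equivalent to a shift on a space $\HH$ of functions on $\D$ whose kernel $K(z,w)=\sum_n \bar w^n z^n/\omega_n$ satisfies the same failure criterion used in \cite{LinXuYu2026}. The failure then passes to the $z^k$-invariant subspace $\{f(z^k): f\in\mathcal N\}$ of $A^2_\alpha$, where $\mathcal N\subset\HH$ is the bad subspace.
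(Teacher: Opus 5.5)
\textbf{Sufficiency.} Your sufficiency argument rests on an inequality that $\mathcal B_\alpha^k$ does not satisfy. Your coefficientwise reduction of the interior condition to concavity of $\gamma_m=1/\|z^m\|_\alpha^2$ along step-$k$ progressions is correct, but two things break.

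(a) You dropped the boundary term. Taking $x=0$, $y=1$, the inequality $\|Tx+y\|^2\le 2(\|x\|^2+\|Ty\|^2)$ requires $\|z^k\|_\alpha^2\ge\frac12$. This fails for $\alpha=0,\ k=2$ and for $\alpha=1,\ k=1$ (both give $\frac13$). It also fails for every $\alpha>-1$ once $k$ is large, since $\|z^k\|_\alpha^2\to0$.

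(b) The concavity goes the wrong way for $\alpha>0$. The ratio of consecutive first differences of $\gamma_m$ is $(m+\alpha+2)/(m+2)>1$, so $\gamma$ is convex. For $\alpha=1$, $\gamma_m=(m+1)(m+2)/2$ and $\gamma_{m+k}-2\gamma_m+\gamma_{m-k}=k^2>0$. A sequence growing like $m^{\alpha+1}$ is concave only when $\alpha\le0$. Even for $k=1$ your criterion holds exactly for $-1<\alpha\le0$, which is why Shimorin needed the refined criterion of Theorem~\ref{t2}, with $\varphi(z)=1-(1-z)^3$, to reach $0<\alpha\le1$.

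That refined criterion cannot be applied to $T^k$ directly either: for $f=1$, $\|\varphi(\zeta z^k)\|_\alpha^2=9\|z^k\|_\alpha^2+9\|z^{2k}\|_\alpha^2+\|z^{3k}\|_\alpha^2\to0$. Moreover, a $T^k$-invariant subspace need not be $T$-invariant. The missing idea is the cyclic lifting of Proposition~\ref{p1}:
\begin{itemize}
\item $R=PT^{\oplus k}$ on $\HH^{\oplus k}$ is unitarily equivalent to $\bigoplus_j\omega^jT$, so it inherits condition (i) from $T$ by rotation invariance.
\item For a $T^k$-invariant $\mathcal M$, the space $\mathcal N=\mathcal M\oplus T\mathcal M\oplus\cdots\oplus T^{k-1}\mathcal M$ is $R$-invariant, with $\mathcal N\ominus R\mathcal N=(\mathcal M\ominus T^k\mathcal M)\oplus0\oplus\cdots\oplus0$.
\item Projecting $[\mathcal N\ominus R\mathcal N]_R$ onto the first coordinate gives $\mathcal M=[\mathcal M\ominus T^k\mathcal M]_{T^k}$.
\end{itemize}

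\textbf{Necessity.} Your final plan, transporting to the space with kernel $\sum_n(z\bar w)^n/\omega_n$ via $F\mapsto F(z^k)$, is exactly the paper's unitary $V_k\colon\HH_{\alpha,k}\to\E_k$. Your intuition that only the asymptotics of the weights matter is also what the paper ultimately establishes (see the remark after Theorem~\ref{t4}). But you leave that step as an expectation, and it is the whole content of this direction. The Lin--Xu--Yu failure comes from a negative value $K_A(b,0)<0$ of the kernel of a zero-based subspace. That is a property of the exact norm, so it does not transfer to an asymptotically equivalent norm automatically. The paper needs two ingredients you lack.

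First, after the normalization $k^{\alpha+1}$ one has $\|F\|_{\alpha,k}^2=\langle QF,F\rangle_\alpha$ with $Q$ diagonal and $Q-I$ compact. Lemma~\ref{l2} shows that kernel negativity survives such a perturbation. Conjugating by the M\"obius unitaries $U_t$, which tend weakly to $0$, gives $U_t^*QU_t\to I$ strongly. The negative value is then carried to the zero set $E=\varphi_t(A)$.

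Second, Proposition~\ref{l1} turns $K_E^{\HH}(x,y)<0$ with $0<x<y<1$ into an explicit failure. The one-dimensional wandering subspaces of $\mathcal N_m=\{f\in I_E:\ord_0 f\ge m\}$ are spanned by $J$-real unit vectors $g_m$ forming an orthonormal basis of $I_E$. So $\sum_m g_m(x)g_m(y)<0$ forces some $g_m$ to vanish at a real $c\in(x,y)$. Meanwhile $z^m\prod_{a\in E}(z-a)\in\mathcal N_m$ does not vanish at $c$. Without an argument of this kind, your necessity direction remains a sketch.
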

	
The wandering subspace property for more general
multipliers was investigated by Carswell, Duren,
and Stessin \cite{CDS2002}, who constructed infinite
Blaschke products failing this property on the
classical Bergman space.
 Carswell and
Weir \cite{CW2013} studied wandering subspaces
associated with Blaschke products on the Bergman space.
Gallardo-Guti\'errez, Partington, and Seco
\cite{GPS2020} subsequently investigated finite
Blaschke multipliers on Dirichlet-type spaces.
They established positive results under suitable
equivalent Hilbert norms and, for powers of the shift,
under the original norm in a restricted parameter range.
They also noted that the wandering subspace property
for finite Blaschke products on the classical
Bergman space remained an open problem.

Since the wandering subspace depends on the
 inner product, equivalent renormings
need not preserve the wandering subspace property.
For finite Blaschke products, we obtain the
following result for the  weighted
Bergman norms.

\begin{thm}\label{blaschke}
	For every $\alpha>-1$, there exists a finite
	Blaschke product $B$ such that the Beurling-type
	theorem fails for $M_B$ on $A^2_\alpha(\D)$.
\end{thm}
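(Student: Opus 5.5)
We prove Theorem~\ref{blaschke} by reducing it to the statement for powers of the shift. For $\alpha>1$ it follows at once from Theorem~\ref{t1}: take $B(z)=z^k$ for any $k\ge1$, since $M_{z^k}=\mathcal B_\alpha^k$. The real content is therefore the range $-1<\alpha\le1$. There Theorem~\ref{t1} says every power $\mathcal B_\alpha^k$ has the Beurling property, so monomials no longer suffice and we need a genuinely non-monomial Blaschke product.

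The first plan is to conjugate by a disc automorphism. Put $\varphi_a(z)=(a-z)/(1-\bar a z)$ and $B=\varphi_a^k$ or $B=z^k\varphi_a$. The weighted composition operator
\[
U_a f=(f\circ\varphi_a)\,(\varphi_a')^{(2+\alpha)/2}
\]
is unitary on $A^2_\alpha(\D)$ and intertwines $M_{\psi\circ\varphi_a}$ with $M_\psi$. Hence $M_{\varphi_a^k}$ is unitarily equivalent to $M_{z^k}$, which has the Beurling property when $\alpha\le1$, so this choice does not work. We must instead use a product with several distinct zeros that cannot be moved to $0$ by a single automorphism, for example
\[
B(z)=z\,\varphi_a(z),\qquad a\neq0 .
\]

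The construction mimics the counterexample mechanism of \cite{LinXuYu2026} and of Seco \cite{S2019}. Take a cyclic subspace
\[
\mathcal M=[f]_{M_B}=\overline{\operatorname{span}}\{B^nf:n\ge0\}.
\]
Since $\mathcal M$ is cyclic, $\mathcal M\ominus B\mathcal M$ is one-dimensional, spanned by $g=f-P_{B\mathcal M}f$. The wandering subspace property then means $[g]_{M_B}=\mathcal M$, i.e. $f\in[g]_{M_B}$. To violate this, pick $f$ as a combination of reproducing kernels, or of low-degree polynomials, chosen so that $g$ acquires an extra zero that $f$ does not share. Concretely, $g$ should vanish at a point $\lambda$ where $f(\lambda)\ne0$, while no element of $[g]_{M_B}$ can be nonzero there, or more generally $g$ satisfies a linear constraint that is preserved under multiplication by $B$. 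Such constraints come from the fibres $B^{-1}(w)$: $M_B^*k_\lambda$ is a multiple of $k_\lambda$ only in special configurations, so instead one uses functionals of the form $\sum_{\lambda\in B^{-1}(w)}c_\lambda\,\delta_\lambda$, which are invariant up to scalar under multiplication by $B$.

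The decisive computation is the Gram matrix of $\{B^nf\}$ in $A^2_\alpha$, equivalently the reproducing kernel of $\mathcal M$ along fibres. We must show that for suitable $a$ and $\alpha$ the component $g$ satisfies such a fibre constraint that $f$ violates. Because $B$ is not a monomial, the Gram matrix is not diagonal, and this is where we expect the main obstacle to lie. The first attempt is a perturbative argument in the parameter $a$ or in the coefficients of $f$, reducing the violation to the sign of an explicit quantity built from the moments $\|z^n\|_\alpha^2=n!\,\Gamma(\alpha+2)/\Gamma(n+\alpha+2)$. The choice $B=z\varphi_a$ is attractive because $B(z)=z^2$ when $a=0$, so the problem becomes a small perturbation of the known structure of $\mathcal B_\alpha^2$, and the failure would then be shown to appear at first order in $a$. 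If this first-order term vanishes identically, we would move to second order, or use products with more factors.
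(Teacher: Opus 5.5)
\paragraph{The case $-1<\alpha\le1$ is not proved.} Your case $\alpha>1$ is fine ($B(z)=z^k$ and Theorem~\ref{t1}). You are also right that $\varphi_a^k$ is useless, because $M_{\varphi_a^k}$ is unitarily equivalent to $M_{z^k}$. But for $-1<\alpha\le1$, which is the whole content of the statement, the proposal contains no proof. The ``decisive computation'' showing that the wandering vector $g$ of some $[f]_{M_B}$ satisfies a constraint that $f$ violates is never carried out. In its place you give a contingency plan: first order in $a$, else second order, else more factors.

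The perturbative idea cannot substitute for that computation. At $a=0$ you have $B=-z^2$, and Theorem~\ref{t1} gives the wandering subspace property for every invariant subspace. The condition $f\in[g]_{M_B}$ is a closure statement with no evident semicontinuity in $a$, so a first-order expansion does not detect its failure. You need an exact obstruction for a fixed $a$ and a fixed subspace, and for a general cyclic vector the Gram matrix of $\{B^nf\}$ gives you no handle on one.

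Your fixed choice of degree $2$ is also unjustified. Compose with an automorphism sending $\pm c$ to $0,a$, where $2c/(1+c^2)=|a|$. This shows that $M_{z\varphi_a}$ is unitarily equivalent to a unimodular multiple of $M_{B_{2,c^2}}$, which is the case $k=2$ of the paper's family. For $\alpha=0$ the paper's mechanism yields nothing in that case, since $\sum_n(2n+1)t^n=(1+t)/(1-t)^2$ has no zero in $\D$.

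Finally, your remark that $M_B^*k_\lambda$ is a multiple of $k_\lambda$ only in special configurations is wrong. In fact $M_B^*k_\lambda=\overline{B(\lambda)}\,k_\lambda$ for every $\lambda$. So a single zero of $g$ at a point where some element of $\mathcal M$ is nonzero already suffices, and no fibre combinations are needed.

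\paragraph{The missing idea.} You need to choose $B$ and $\mathcal M$ so that the computation collapses to something explicit. The paper takes $B_{k,a}(z)=b_a(z^k)$ and $\mathcal M=\E_k=[1]_{M_{B_{k,a}}}$.
\begin{itemize}
\item The unitary $V_k\colon\HH_{\alpha,k}\to\E_k$, $V_kF=k^{(\alpha+1)/2}F(z^k)$, carries $M_{b_a}$ to $M_{B_{k,a}}|_{\E_k}$. Here $\HH_{\alpha,k}$ has the explicit kernel \eqref{e35} but is not M\"obius invariant.
\item Since $b_a\HH_{\alpha,k}=\{F:F(a)=0\}$ (Lemma~\ref{l3}), the wandering subspace is $\mathbb C\,V_kK_{\alpha,k}(\cdot,a)$.
\item Failure then reduces to a one-variable fact: $\sum_n t^n/\|z^{kn}\|_\alpha^2$ has a zero $t_0\in(-1,0)$. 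This holds for large $k$, because $\|z^k\|_\alpha^2\to0$ and $\|z^k\|_\alpha^2/\|z^{kn}\|_\alpha^2\le n^{\alpha+1}$ make the series negative at $t=-2\|z^k\|_\alpha^2$.
\item With $a>|t_0|$ and $z_0^k=t_0/a$, every function generated by the wandering subspace vanishes at $z_0$, while $1\in\E_k$ does not.
\end{itemize}
Without an exact reduction of this kind, your plan does not establish the theorem for $-1<\alpha\le1$.
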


The remainder of the paper is organized as follows.
Section~\ref{s2} establishes the wandering subspace
property for $\mathcal{B}_{\alpha}^k$ when
$-1<\alpha\leq1$ via a cyclic lifting argument.
Section~\ref{s3} proves its failure for $\alpha>1$
using reproducing kernels and compact perturbations.
Section~\ref{s4} proves Theorem~\ref{blaschke}
by constructing finite Blaschke products with
cyclic invariant subspaces  but fails the wandering subspace property.

	\section{The positive range: cyclic lifting of Shimorin's criterion}\label{s2}
	
	Recall that an operator $T$ on a Hilbert space $\HH$
	is called an analytic contraction if
	\[
	\|T\|\leq 1
	\quad\text{and}\quad
	\bigcap_{n\geq0}T^n\HH=\{0\}.
	\]
	We need the following criterion of Shimorin
	\cite[Theorem~2.5]{Shimorin2003}.
	
	\begin{thm}[Shimorin's criterion]\label{t2}
		Let $T$ be an analytic contraction on a Hilbert space
		$\mathcal H$. Suppose that there exists
		$\varphi\in H^\infty(\D)$ satisfying the following conditions:
		\begin{enumerate}
			\item[\textup{(i)}]
			$\|\varphi(\zeta T)f\|\geq\|f\|$
			for every $\zeta\in\T$ and $f\in\mathcal H$;
			
			\item[\textup{(ii)}]
			$\varphi(0)=0$, and for each $r\in[0,1)$,
			the function
			\[
			\varphi_r(z)=\frac{\varphi(z)-\varphi(r)}{z-r}
			\]
			is bounded away from zero on $\D$;
			
			\item[\textup{(iii)}]
			$|\varphi(r)|<1$ for every $r\in[0,1)$.
		\end{enumerate}
		Then
		\[
		\mathcal H=[\mathcal H\ominus T\mathcal H]_T.
		\]
	\end{thm}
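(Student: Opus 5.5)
The plan is to follow Shimorin's strategy from \cite{Shimorin2003}: pass to the Cauchy dual and reduce the wandering subspace property to an injectivity statement for an analytic ``transfer'' function. Throughout, write $\E=\HH\ominus T\HH$.

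\emph{Step 1: $T$ is bounded below.} Since $\varphi(0)=0$, we can factor $\varphi(z)=z\psi(z)$ with $\psi\in H^\infty(\D)$. Condition (i) at $\zeta=1$ then gives
\[
\|f\|\le\|T\psi(T)f\|\le\|\psi\|_\infty\|Tf\|,
\]
so $T$ is bounded below and $T\HH$ is closed. This lets us define the left inverse $L=(T^*T)^{-1}T^*$, which satisfies $LT=I$ and $\ker L=\E$. A standard argument then shows that a vector $x$ lies in $[\E]_T^\perp$ precisely when the $\E$-valued power series
\[
U_x(\lambda)=\sum_{n\ge0}\bigl(P_{\E}L^{*n}\ldots\bigr)\lambda^n,
\]
that is, the Taylor coefficients of $P_{\E}(I-\lambda L)^{-1}x$ paired appropriately, vanish identically. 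Note that $[\E]_T^\perp=\bigcap_n \ker\bigl(P_{\E}L^{n}\bigr)^{*}$-type conditions. So it suffices to show that $x\perp[\E]_T$ forces $x\in\bigcap_n T^n\HH$, which is $\{0\}$ by analyticity.

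\emph{Step 2: move along the real segment.} For $r\in[0,1)$, condition (iii) shows that $\varphi(r)$ lies in $\D$. Condition (ii) gives the factorization
\[
\varphi(T)-\varphi(r)=(T-r)\varphi_r(T),
\]
with $\varphi_r$ bounded away from zero. Combined with (i), applied through a M\"obius change of variable $\tfrac{\varphi-\varphi(r)}{1-\overline{\varphi(r)}\varphi}$, this should show that $T-r$ is bounded below with a uniform estimate. It should also show that the space $\HH\ominus(T-r)\HH$ is the image of $\E$ under an explicit operator analytic in $r$. The aim is an estimate of the form
\[
\|x\|^2\le\sum_n\|P_{\E}L^n x\|^2\cdot(\text{weights})
\]
for $x\perp[\E]_T$, whose right-hand side vanishes.

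\emph{Step 3: conclude.} Using the estimate from Step 2, I would show that $x\perp[\E]_T$ implies $x\in T^n\HH$ for all $n$, by iterating $x=TLx$. Since $\bigcap_n T^n\HH=\{0\}$, this gives $x=0$, and hence $\HH=[\E]_T$.

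The main obstacle is Step 2. Hypotheses (i)--(iii) must be converted into a quantitative lower bound that survives uniformly as $r\to1$ along $[0,1)$; this is exactly where the ``bounded away from zero'' part of (ii) must be used. My first attempt would be to apply (i) to the composed function $\tfrac{\varphi-\varphi(r)}{1-\overline{\varphi(r)}\varphi}$ and to absorb $\varphi_r(T)^{-1}$ using its boundedness. Since this is a known theorem of Shimorin, one may alternatively simply invoke \cite[Theorem~2.5]{Shimorin2003}.
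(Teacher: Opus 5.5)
The paper gives no proof of this statement. It is quoted from \cite[Theorem~2.5]{Shimorin2003}, so your closing sentence is exactly the paper's treatment. Your self-contained sketch, however, has a genuine gap, and its reduction is aimed at the wrong operator (Step~1 is fine). Let $T'=L^*=T(T^*T)^{-1}$ be the Cauchy dual. The condition $x\perp[\E]_T$ says $P_{\E}T^{*n}x=0$ for all $n$; it is not a condition on $P_{\E}L^nx$. Also, $P_{\E}L^{*n}x=0$ for every $x$ once $n\geq1$, so your formula for $U_x$ is vacuous as written. The correct iteration runs as follows. If $x\perp[\E]_T$, then $T^*x\perp[\E]_T$ and $x=T'T^*x$, because $T'T^*$ is the orthogonal projection onto $T\HH$. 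Together with the easy reverse inclusion this gives $[\E]_T^\perp=\bigcap_{n}(T')^n\HH$. Your iteration of $x=TLx$ breaks at the second step: $x\perp T\E$ only yields $T^*T(Lx)=T^*x\perp\E$, not $Lx\perp\E$. The coefficients $P_{\E}L^nx$ detect instead $[\E]_{T'}^\perp=\bigcap_{n}T^n\HH$, which is $\{0\}$ by analyticity of $T$. Hence the vanishing of the right-hand side of your Step~2 estimate for all $x\perp[\E]_T$ is not something that hypothesis gives you; it is equivalent to the conclusion itself. Analyticity of $T$ cannot close the argument: Section~\ref{s3} produces bounded-below analytic contractions ($\mathcal B_\alpha$ restricted to suitable invariant subspaces, $\alpha>1$) for which the conclusion fails.

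What has to be shown is that $T'$ is analytic. Equivalently, $\E$-valued polynomials must be dense in the model $x\mapsto\sum_{n}(P_{\E}L^nx)\lambda^n$, which is injective because $T$ is analytic, carries $T$ to multiplication by $\lambda$, and sends $\E$ to the constants. This is where (i)--(iii) must genuinely be used, and Step~2 contains no argument, only a hoped-for estimate. Its stated target is moreover unattainable. $T=\mathcal B_0$ satisfies (i)--(iii) with $\varphi(z)=1-(1-z)^3$ (Corollary~\ref{c1}), yet $\inf_{\|f\|=1}\|(T-r)f\|\leq 1-r\to0$ because $1\in\sigma_{\mathrm{ap}}(T)$. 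So no lower bound for $T-r$ uniform in $r\in[0,1)$ exists. The non-uniform bound needs no M\"obius substitution: from (i) at $\zeta=1$ and (iii), $\|(\varphi(T)-\varphi(r))f\|\geq(1-|\varphi(r)|)\|f\|$, and $\varphi(T)-\varphi(r)=\varphi_r(T)(T-r)$. The composite $(\varphi-\varphi(r))/(1-\overline{\varphi(r)}\varphi)$ need not even be holomorphic on $\D$ when $\|\varphi\|_\infty>1$. For $\varphi(z)=1-(1-z)^3$ its denominator vanishes in $\D$ as soon as $\varphi(r)>1/2$. So the proposal is complete only through its final citation, which is all the paper offers as well.
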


	The following proposition extends the resulting
	wandering subspace property to arbitrary finite powers of $T$.

	\begin{prop}[Finite cyclic lifting]\label{p1}
		Let $T$ be an analytic contraction on a Hilbert space
		$\mathcal H$.
		If $T$ and $\varphi\in H^\infty(\D)$ satisfy
		the hypotheses of Theorem~\ref{t2}, then for every
		integer $k\geq1$, every closed $T^k$-invariant
		subspace $\mathcal M$ satisfies
		\[
		\mathcal M=
		[\mathcal M\ominus T^k\mathcal M]_{T^k}.
		\]
	\end{prop}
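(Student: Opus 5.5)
The plan is to reduce the statement to a direct application of Theorem~\ref{t2}. Fix $k\ge1$ and a closed $T^k$-invariant subspace $\mathcal M$, and put $S=T^k|_{\mathcal M}$. Since $\|T\|\le1$, we have $\|S\|\le1$. Since $S^n\mathcal M\subset T^{kn}\mathcal H$ and $\bigcap_n T^n\mathcal H=\{0\}$, the operator $S$ is an analytic contraction on $\mathcal M$. Moreover $\mathcal M\ominus S\mathcal M=\mathcal M\ominus T^k\mathcal M$ and $[\,\cdot\,]_S=[\,\cdot\,]_{T^k}$ on subsets of $\mathcal M$. So it suffices to produce $\psi\in H^\infty(\D)$ such that $S$ and $\psi$ satisfy (i)--(iii) of Theorem~\ref{t2}.

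The candidate is built by a cyclic symmetrization. Let $\omega=e^{2\pi i/k}$. The function
\[
\Phi(z)=\prod_{j=0}^{k-1}\varphi(\omega^j z)
\]
is bounded and analytic, and it is invariant under $z\mapsto\omega z$. Hence its Taylor series contains only powers $z^{kn}$, so $\Phi(z)=\psi(z^k)$ for a unique $\psi\in H^\infty(\D)$.

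For condition (i), fix $\zeta\in\T$ and choose $\eta\in\T$ with $\eta^k=\zeta$. By the $H^\infty$ functional calculus for the contraction $T$ (multiplicativity), we get
\[
\psi(\zeta T^k)=\Phi(\eta T)=\prod_{j}\varphi(\omega^j\eta T).
\]
Each factor $\varphi(\omega^j\eta T)$ satisfies $\|\varphi(\omega^j\eta T)f\|\ge\|f\|$ by (i) for $T$, because $\omega^j\eta\in\T$. Applying the factors successively gives $\|\psi(\zeta T^k)f\|\ge\|f\|$ for all $f\in\mathcal H$. Since $\psi(\zeta T^k)$ is a norm limit of polynomials in $T^k$, it leaves $\mathcal M$ invariant and agrees there with $\psi(\zeta S)$. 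Thus (i) holds for $S$. Also $\psi(0)=\varphi(0)^k=0$.

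The main obstacle is the rest of (ii) and (iii) for $\psi$. These involve $\psi(r)=\prod_j\varphi(\omega^j r^{1/k})$, so they require control of $\varphi$ on the rays $\omega^j[0,1)$, not only on $[0,1)$.

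First I would try to show that (i)--(iii) are invariant under rotation of $\varphi$, or that (iii) on all of $\D$ follows from (i). The natural tool is a von Neumann-type inequality: the hypothesis $\|\varphi(\zeta T)\|\ge$ expansive on $\T$ should, via maximum-principle arguments applied to $\varphi(\lambda T)$, give $|\varphi(\lambda)|<1$ on $\D$.

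For (ii), I would factor
\[
\psi(w)-\psi(r)=\prod_j\varphi(\omega^j z)-\prod_j\varphi(\omega^j s),\qquad z^k=w,\ s^k=r,
\]
and write the difference quotient $\bigl(\psi(w)-\psi(r)\bigr)/(w-r)$ through the factorization $w-r=\prod_j(z-\omega^j s)$. The aim is to use the lower bounds on $\varphi_r$ coming from (ii) to bound the quotient away from zero.

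If this direct route fails, the fallback is to revisit the proof of Theorem~\ref{t2} in \cite{Shimorin2003}. There I would check that (ii)--(iii) are used only to guarantee that certain operators $\varphi_r(T)$ are bounded below and that $\varphi(rT)$-type series converge, and then verify those operator-level facts for $S$ directly from the corresponding facts for $T$.
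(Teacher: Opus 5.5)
Your reduction to $S=T^k|_{\mathcal M}$ is fine, and so is your check of (i) and of $\psi(0)=0$ for the symmetrized function $\psi(z^k)=\prod_{j=0}^{k-1}\varphi(\omega^jz)$. One small correction: for general $\psi\in H^\infty(\D)$, $\psi(\zeta T^k)$ is only a strong limit of polynomials in $T^k$, not a norm limit, but this still gives invariance of $\mathcal M$.

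The gap is that you never establish (ii) and (iii) for $\psi$, and in general they are false. Hypothesis (iii) controls $|\varphi|$ only on $[0,1)$. But $\psi(r)=\prod_j\varphi(\omega^jr^{1/k})$ samples $\varphi$ on all the rays $\omega^j[0,1)$. Take the function used in Corollary~\ref{c1}, $\varphi(z)=1-(1-z)^3$, with $k=2$. Then $\psi(s^2)=\bigl(1-(1-s)^3\bigr)\bigl(1-(1+s)^3\bigr)\to-7$ as $s\to1^-$. So $|\psi(r)|>1$ for $r$ near $1$, and (iii) fails. The same example refutes the implication you hoped for, that (i) forces $|\varphi|<1$ on $\D$: $\mathcal B_0$ satisfies (i) with this $\varphi$, yet $|\varphi(-s)|>1$ for $s>2^{1/3}-1$. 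Your fallback of re-examining Shimorin's proof is a plan, not an argument. As it stands, nothing beyond condition (i) has been verified.

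The missing idea is to keep $\varphi$ and change the operator instead. Since (ii) and (iii) depend only on $\varphi$, you want an operator that satisfies (i) with the same $\varphi$ and still encodes $T^k|_{\mathcal M}$.

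The paper uses $R=PT^{\oplus k}$ on $\mathcal H^{\oplus k}$, where $P$ cyclically permutes coordinates. Conjugating by the discrete Fourier transform gives $R\cong\bigoplus_{j=0}^{k-1}\omega^jT$. Hence $\|\varphi(\eta R)\mathbf x\|\geq\|\mathbf x\|$ follows from (i) for $T$ at the points $\eta\omega^j\in\T$. Moreover, $R$ is an analytic contraction with $R^k=(T^k)^{\oplus k}$.

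The subspace $\mathcal N=\mathcal M\oplus T\mathcal M\oplus\cdots\oplus T^{k-1}\mathcal M$ is closed, because $T$ is bounded below by (i) and $\varphi(0)=0$. It is $R$-invariant, with $\mathcal N\ominus R\mathcal N=(\mathcal M\ominus T^k\mathcal M)\oplus\{0\}^{\oplus(k-1)}$. Apply Theorem~\ref{t2} to $R|_{\mathcal N}$ and project onto the first coordinate. The vector $R^n(g,0,\dots,0)$ contributes to that coordinate only when $k\mid n$, which yields $\mathcal M=[\mathcal M\ominus T^k\mathcal M]_{T^k}$.
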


	\begin{proof}
		The case $k=1$ follows directly from
		Theorem~\ref{t2} applied to $T|_{\mathcal M}$.
		We now consider the case $k\geq2$.
		
		\medskip
		\noindent
		\emph{Step 1. Construction of the cyclic operator.}
		
		On the orthogonal direct sum $\mathcal H^{\oplus k}$,
		define
		\begin{equation}\label{e5}
			R\mathbf{x}
			=
			(Tx_{k-1},Tx_0,\ldots,Tx_{k-2}),
		\end{equation}
		and the cyclic permutation operator $P$ on
		$\mathcal H^{\oplus k}$ by
		\[
		P\mathbf{x}
		=
		(x_{k-1},x_0,\ldots,x_{k-2}),
		\]
		where
		$\mathbf{x}=(x_0,\ldots,x_{k-1})
		\in\mathcal H^{\oplus k}$.
		Then $P$ is unitary, $P^k=I$, and
		\[
		R=PT^{\oplus k}=T^{\oplus k}P,
		\]
		where $T^{\oplus k}=T\oplus\cdots\oplus T$.
		Consequently,
		\begin{equation}\label{e6}
			R^n=P^n(T^n)^{\oplus k},
			\qquad
			R^k=(T^k)^{\oplus k}.
		\end{equation}
		
		Let $\omega=e^{2\pi i/k}$ and define the unitary
		operator $U$ on $\mathcal H^{\oplus k}$ by
		\[
		(U\mathbf{x})_j
		=
		\frac{1}{\sqrt{k}}
		\sum_{\ell=0}^{k-1}
		\omega^{j\ell}x_\ell,
		\qquad 0\leq j\leq k-1.
		\]
		A direct computation gives
		\[
		UPU^*
		=
		\bigoplus_{j=0}^{k-1}
		\omega^j I_{\mathcal H}.
		\]
		Since $U$ commutes with $T^{\oplus k}$,
		we obtain
		\[
		URU^*
		=
		\bigoplus_{j=0}^{k-1}
		\omega^jT.
		\]
		In particular, $R$ is a contraction.
		
		Since $T$ is analytic, \eqref{e6} gives
		\[
		\bigcap_{n\geq0}
		R^n\mathcal H^{\oplus k}
		=
		\left(
		\bigcap_{n\geq0}T^n\mathcal H
		\right)^{\oplus k}
		=
		\{0\}.
		\]
		Thus $R$ is also analytic and hence completely
		nonunitary, so its $H^\infty$ functional calculus
		is well defined.
		
		The $H^\infty$ functional calculus preserves
		unitary equivalence and finite orthogonal
		direct sums. Therefore, for every $\eta\in\T$,
		\[
		U\varphi(\eta R)U^*
		=
		\bigoplus_{j=0}^{k-1}
		\varphi(\eta\omega^jT).
		\]
		For
		$\mathbf{x}\in\mathcal H^{\oplus k}$,
		write
		\[
		\mathbf{y}
		=
		U\mathbf{x}
		=
		(y_0,\ldots,y_{k-1}).
		\]
		Since $\eta\omega^j\in\T$, the hypothesis on
		$T$ yields
		\[
		\begin{aligned}
			\|\varphi(\eta R)\mathbf{x}\|^2
			&=
			\|U\varphi(\eta R)U^*\mathbf{y}\|^2\\
			&=
			\sum_{j=0}^{k-1}
			\|\varphi(\eta\omega^jT)y_j\|^2\\
			&\geq
			\sum_{j=0}^{k-1}\|y_j\|^2\\
			&=
			\|\mathbf{x}\|^2.
		\end{aligned}
		\]
		Hence
		\begin{equation}\label{e7}
			\|\varphi(\eta R)\mathbf{x}\|
			\geq\|\mathbf{x}\|,
			\qquad
			\eta\in\T,\quad
			\mathbf{x}\in\mathcal H^{\oplus k}.
		\end{equation}
		
		\medskip
		\noindent
		\emph{Step 2. Application of Shimorin's criterion.}
		
		Let $\mathcal M$ be a closed $T^k$-invariant
		subspace and set
		\begin{equation}\label{e8}
			\mathcal N
			=
			\mathcal M\oplus T\mathcal M
			\oplus\cdots\oplus T^{k-1}\mathcal M
			\subset\mathcal H^{\oplus k}.
		\end{equation}
		Since $T$ is bounded below, and hence $\mathcal N$ is a closed
		subspace of $\mathcal H^{\oplus k}$.
		By the definition of $R$,
		\begin{equation}\label{e9}
			R\mathcal N
			=
			T^k\mathcal M\oplus T\mathcal M
			\oplus\cdots\oplus T^{k-1}\mathcal M
			\subset\mathcal N.
		\end{equation}
		Therefore $\mathcal N$ is invariant under $R$.
		
		Since $R$ is  contraction and analytic, so is $R|_{\mathcal N}$.

		We next verify that $R|_{\mathcal N}$ satisfies  the expansive inequality
		\eqref{e7}.
		Since $\mathcal N$ is invariant under $R$,
		the $H^\infty$ functional calculus gives
		\[
		\varphi(\eta R|_{\mathcal N})
		=
		\varphi(\eta R)|_{\mathcal N},
		\qquad \eta\in\T.
		\]
		It follows from \eqref{e7} that
		\[
		\|\varphi(\eta R|_{\mathcal N})\mathbf{x}\|
		=
		\|\varphi(\eta R)\mathbf{x}\|
		\geq\|\mathbf{x}\|,
		\qquad
		\mathbf{x}\in\mathcal N,\quad\eta\in\T.
		\]
		Thus $R|_{\mathcal N}$ satisfies all the
		hypotheses of Shimorin's criterion.
		Applying Theorem~\ref{t2}, we obtain
		\begin{equation}\label{e10}
			\mathcal N
			=
			[\mathcal N\ominus R\mathcal N]_R.
		\end{equation}
		
		\medskip
		\noindent
		\emph{Step 3. Identification of the wandering subspace.}
		
		Comparing \eqref{e8} and \eqref{e9},
		and using the orthogonality of the external
		direct sum, we obtain
		\begin{equation}\label{e11}
			\mathcal N\ominus R\mathcal N
			=
			(\mathcal M\ominus T^k\mathcal M)
			\oplus\{0\}^{\oplus(k-1)}.
		\end{equation}
		Let $g\in\mathcal M\ominus T^k\mathcal M$.
		For $n=qk+r$, where $0\leq r<k$, the vector
		\[
		R^n(g,0,\ldots,0)
		\]
		has $T^ng$ in the $r$th coordinate
		and zero in all other coordinates.
		
		Let
		$\pi_0:\mathcal H^{\oplus k}\to\mathcal H$
		be the coordinate projection onto the
		first component. Then
		\[
		\pi_0R^n(g,0,\ldots,0)
		=
		\begin{cases}
			(T^k)^qg, & n=qk,\\
			0, & k\nmid n.
		\end{cases}
		\]
		
		Now fix $f\in\mathcal M$.
		By \eqref{e10} and \eqref{e11},
		the vector $(f,0,\ldots,0)\in\mathcal N$
		belongs to the closed linear span of
		\[
		\left\{
		R^n(g,0,\ldots,0):
		n\geq0,\ 
		g\in\mathcal M\ominus T^k\mathcal M
		\right\}.
		\]
		Since $\pi_0$ is bounded, we obtain
		$
		f\in
		[\mathcal M\ominus T^k\mathcal M]_{T^k}.
		$
		Therefore
		\[
		\mathcal M
		\subset
		[\mathcal M\ominus T^k\mathcal M]_{T^k}.
		\]
		The reverse inclusion is immediate. Hence
		\[
		\mathcal M
		=
		[\mathcal M\ominus T^k\mathcal M]_{T^k}.
		\]
	\end{proof}

	\begin{cor}\label{c1}
		For $-1<\alpha\leq1$ and every integer $k\geq1$,
		the Beurling-type theorem holds for $ \mathcal{B}_{\alpha}^k$ on $A^2_\alpha(\D)$.
	\end{cor}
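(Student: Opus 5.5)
The plan is to apply Proposition~\ref{p1} with $T=\mathcal B_\alpha$ on $\HH=A^2_\alpha(\D)$. Since the conclusion of Proposition~\ref{p1} is exactly the Beurling-type theorem for $\mathcal B_\alpha^k$, it only remains to check the hypotheses of Theorem~\ref{t2} for a suitable $\varphi\in H^\infty(\D)$.

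\textbf{Analyticity.} First, $\mathcal B_\alpha$ is a contraction for $\alpha>-1$. In the orthogonal monomial basis we have
\[
\|z^n\|_\alpha^2=\frac{n!\,\Gamma(\alpha+2)}{\Gamma(n+\alpha+2)}.
\]
The ratio $\|z^{n+1}\|_\alpha^2/\|z^n\|_\alpha^2=(n+1)/(n+\alpha+2)$ is less than $1$, so $\|\mathcal B_\alpha\|\le1$. Moreover, $\bigcap_n z^nA^2_\alpha=\{0\}$, because any function in the intersection vanishes to infinite order at $0$. Hence $\mathcal B_\alpha$ is an analytic contraction.

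\textbf{Choice of $\varphi$.} Following Shimorin \cite{Shimorin2003}, I would take
\[
\varphi(z)=\frac{z}{2-z}.
\]
This is the choice Shimorin uses for the range $0\le\alpha\le1$ (the Bergman case and weights between Hardy and Bergman). For $-1<\alpha<0$, the operator $\mathcal B_\alpha$ is concave, i.e.\ a $2$-concave operator, and Shimorin's Wold-type decomposition from \cite{Shimorin2001} applies. Alternatively, the same $\varphi$ works, since concavity $\|T^2x\|^2-2\|Tx\|^2+\|x\|^2\le0$ yields $\|(2-\zeta T)^{-1}\zeta Tf\|\ge\|f\|$.

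\textbf{Conditions (ii) and (iii).} These are elementary. We have $\varphi(0)=0$, and for $r\in[0,1)$,
\[
\varphi_r(z)=\frac{2}{(2-z)(2-r)},
\]
which satisfies $|\varphi_r|\ge 2/(3\cdot2)>0$ on $\D$. Also $|\varphi(r)|=r/(2-r)<1$.

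\textbf{Condition (i), the main obstacle.} Condition (i) says $\|\zeta\mathcal B_\alpha f\|\ge\|(2-\zeta\mathcal B_\alpha)f\|$ for all $f\in A^2_\alpha$ and $\zeta\in\T$. Expanding the squares, this is equivalent to
\[
\|f\|^2-\operatorname{Re}\langle \zeta\mathcal B_\alpha f,f\rangle\le0\ \text{ fails}
\]
in naive form, so the estimate must be done carefully. This is precisely the content of Shimorin's verification in \cite[Section~3]{Shimorin2003}, proved via the kernel inequality for $A^2_\alpha$ with $\alpha\le1$, which amounts to $(1-\overline wz)^{2+\alpha}$ having the appropriate positivity, or equivalently the operator inequality $\|Tx+y\|^2\le2(\|x\|^2+\|Ty\|^2)$. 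I would therefore simply cite \cite{Shimorin2003}, where Theorem~\ref{t2} is applied to $\mathcal B_\alpha$ for exactly $-1<\alpha\le1$, to conclude that hypotheses (i)--(iii) hold with this $\varphi$. Invoking Proposition~\ref{p1} then finishes the proof.
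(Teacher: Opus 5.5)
\textbf{The gap: condition (i) fails for your $\varphi$.} Your overall plan is the paper's: check that $\mathcal B_\alpha$ is an analytic contraction (your argument is fine), verify the hypotheses of Theorem~\ref{t2} for some $\varphi$, and apply Proposition~\ref{p1}. But with $\varphi(z)=z/(2-z)$, condition (i) is not delicate; it is false, and no citation can supply it. Since $\|T\|\le1$, the operator $2-\zeta T$ is invertible. Writing $f=(2-\zeta T)g$, condition (i) becomes $\|Tg\|\ge\|2g-\zeta Tg\|$, that is,
\[
\|g\|^2\le\operatorname{Re}\bigl(\zeta\langle Tg,g\rangle\bigr)\qquad(\zeta\in\T,\ g\in\HH).
\]
Choosing $\zeta$ with $\zeta\langle Tg,g\rangle=-|\langle Tg,g\rangle|$ forces $g=0$. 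So this $\varphi$ violates (i) for every nonzero operator, even the Hardy shift. The expansion you began and then set aside is in fact a proof that this route cannot work.

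More generally, for $T=\mathcal B_\alpha$, any $\varphi\ne0$ with $\varphi(0)=0$ and $\|\varphi\|_\infty\le1$ violates (i) at $f=1$. Indeed, $\varphi(\zeta\mathcal B_\alpha)1=\varphi(\zeta z)$, and since $\|z^n\|_\alpha<1$ for $n\ge1$, we get $\|\varphi\|_\alpha^2<\|\varphi\|_{H^2}^2\le1$. So one needs $\sup_{\D}|\varphi|>1$.

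Your side claims are also wrong. $\mathcal B_\alpha$ is never concave: concavity forces $\|Tx\|\ge\|x\|$, whereas $\|z\|_\alpha^2=1/(\alpha+2)<1$. Hence the Wold-type decomposition for concave operators does not cover $-1<\alpha<0$. In addition, your proposal never locates where the restriction $\alpha\le1$ is actually used.

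\textbf{What the paper does instead.} The paper takes the polynomial $\varphi(z)=1-(1-z)^3$, for which $|\varphi(-1)|=7$.
\begin{itemize}
\item Conditions (ii) and (iii) are elementary. One has $\varphi_r(z)=(1-r)^2+(1-r)(1-z)+(1-z)^2$, whose zeros $z=1-(1-r)e^{\pm2\pi i/3}$ have real part $1+(1-r)/2>1$ and so lie outside $\overline{\D}$. Also $|\varphi(r)|=1-(1-r)^3<1$.
\item The nontrivial condition (i) is taken from Shimorin's Proposition~3.3 and the remark following it. Its input is the subharmonicity
\[
\Delta\log\frac{w_\alpha(z)}{1-|z|^2}=\frac{4(1-\alpha)}{(1-|z|^2)^2}\ge0 ,
\]
and this is exactly where $\alpha\le1$ enters.
\end{itemize}
To repair your proof, replace your $\varphi$ by this one and cite that result for (i).
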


	\begin{proof}
		For $-1<\alpha\leq1$, the weight
		\[
		w_\alpha(z)
		=
		(\alpha+1)(1-|z|^2)^\alpha
		\]
		satisfies
		\[
		\Delta\log
		\frac{w_\alpha(z)}{1-|z|^2}
		=
		\frac{4(1-\alpha)}{(1-|z|^2)^2}
		\geq0,
		\]
		where
		$\Delta=\partial_x^2+\partial_y^2$.
	 By Shimorin's Proposition~3.3 and the subsequent
	 remark \cite{Shimorin2003},
	 the polynomial $\varphi(z)=1-(1-z)^3$
	 satisfies all the conditions of Theorem~\ref{t2}.
	 Since $\mathcal{B}_{\alpha}$ is an analytic contraction,
	 Proposition~\ref{p1} applies.
	\end{proof}

	\section{The negative range}\label{s3}
	
	In this section, we establish that for every $\alpha>1$,
	there is a nonzero closed $ \mathcal{B}_{\alpha}^k$-invariant subspace
	 that fails the wandering subspace property.
	
	For a finite set $A\subset\D$, write
	\[
	I_A
	=
	\{f\in A^2_\alpha:
	f(a)=0\ \text{for every }a\in A\},
	\]
	and let $K_A$ denote its reproducing kernel.

	Recall the following consequence of the proof of  \cite[Theorem~1.1]{LinXuYu2026}.
	\begin{thm}	\label{t3}
		For every $\alpha>1$ there exist a finite set
		$A\subset\D\setminus\mathbb R$,
		invariant under complex conjugation,
		and a $b\in(0,1)$ such that
		\begin{equation}\label{e12}
			K_A(b,0)<0.
		\end{equation}
	\end{thm}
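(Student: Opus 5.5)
The plan is to obtain \eqref{e12} from an explicit finite-rank formula for $K_A$, and then to choose the configuration $A$ by an asymptotic analysis in which the condition $\alpha>1$ enters. Write $K(z,w)=(1-z\bar w)^{-2-\alpha}$ for the reproducing kernel of $A^2_\alpha$ and $k_a=K(\cdot,a)$. Since $I_A^\perp=\operatorname{span}\{k_a:a\in A\}$, we have
\[
K_A(z,w)=K(z,w)-\sum_{a,a'\in A}K(z,a)\,G^{-1}_{a,a'}\,K(a',w),
\qquad G=\bigl(K(a',a)\bigr)_{a,a'\in A}.
\]
Taking $w=0$ and using $K(a',0)=1$, this becomes
\[
K_A(z,0)=1-\sum_{a}c_a\,K(z,a),
\qquad c=G^{-1}\mathbf 1 .
\]
Because $A=\bar A$ and $K(\bar z,\bar w)=\overline{K(z,w)}$, the coefficients satisfy $c_{\bar a}=\overline{c_a}$. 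Hence $K_A(b,0)$ is real for $b\in(0,1)$, and its sign is meaningful.

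\emph{First attempt.} Take the simplest admissible configuration $A=\{a,\bar a\}$ with $a=\rho e^{i\theta}$ and $0<\theta<\pi$. The $2\times2$ Gram matrix can be inverted by hand, which gives a closed expression for $K_A(b,0)$ in terms of
\[
K(b,a),\qquad K(a,a)=(1-\rho^2)^{-2-\alpha},\qquad K(a,\bar a)=(1-\rho^2e^{2i\theta})^{-2-\alpha}.
\]
I would then let $a$ tend to a point of the negative real axis from above (so $\theta\to\pi$) while simultaneously choosing $b$ close to $1$. The aim is a degenerate regime in which $K_A(b,0)$ is dominated by a term of fixed sign. The factor $1-\alpha$ (equivalently, the sign of $\Delta\log\bigl(w_\alpha/(1-|z|^2)\bigr)$ from Corollary~\ref{c1}) should appear in the leading coefficient of the expansion, and that coefficient should be negative precisely when $\alpha>1$.

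\emph{Fallback via a limiting configuration.} If two points do not suffice, I would use a continuity argument. Let $a_\varepsilon=x+i\varepsilon$ with $x$ real. As $\varepsilon\to0$, the subspace $I_{\{a_\varepsilon,\bar a_\varepsilon\}}$ converges to $\{f:f(x)=f'(x)=0\}$, and the kernels converge locally uniformly. The reason is that the span of $k_{a_\varepsilon}$ and $k_{\bar a_\varepsilon}$ converges to the span of $k_x$ and $\partial_{\bar w}K(\cdot,w)|_{w=x}$, and the corresponding Gram matrix stays invertible in the limit. Consequently, it suffices to find real points $x_1,\dots,x_m$ and a $b\in(0,1)$ such that the kernel of $\{f: f(x_j)=f'(x_j)=0\ \text{for all } j\}$ is negative at $(b,0)$. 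Splitting each double real zero into a conjugate pair then yields a set $A\subset\D\setminus\mathbb R$ satisfying \eqref{e12}. This is presumably how the zero-based negativity used in \cite{LinXuYu2026} transfers to the present conjugation-invariant, non-real setting. The required strict negativity for the real double-zero configuration would itself come from an asymptotic computation as the $x_j$ approach the boundary, in the same spirit as the $2\times2$ computation above.

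\emph{Main obstacle.} The hard step is the asymptotic sign computation: I must identify the leading term of $K_A(b,0)$ in the chosen degenerate regime and show that its coefficient is negative exactly when $\alpha>1$. The Gram-matrix inversion is routine, but there is heavy cancellation between the terms $1$ and $\sum_a c_aK(b,a)$. I would first try $b\to1$ with $\rho\to1$ at a comparable rate, expanding each kernel entry in powers of $1-\rho$, and then tune the rates until the $(\alpha-1)$-dependent term dominates. The perturbation step from real double zeros to conjugate pairs is only a compactness and continuity argument, so I expect it to be straightforward.
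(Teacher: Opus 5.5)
The paper does not prove this statement. It quotes it as a by-product of the proof of Theorem~1.1 in Lin--Xu--Yu, and that external argument is where a configuration with a negative kernel value is actually produced.

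Several parts of your set-up are correct: the formula $K_A(z,0)=1-\sum_a c_aK(z,a)$ with $c=G^{-1}\mathbf 1$, the reality of $K_A(b,0)$, and the continuity passage from conjugate pairs to double real zeros. But the decisive step, exhibiting some $(A,b)$ with $K_A(b,0)<0$, is only announced. No expansion is computed, and the claim that a factor $1-\alpha$ ``should'' appear in a leading coefficient is a hope. That step is the entire content of the theorem, so there is a genuine gap.

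Any such computation must also respect the following constraint. For fixed $(A,b)$ the map $\alpha\mapsto K_A(b,0)$ is continuous. At $\alpha=1$, Shimorin's theorem applies to $I_A$, whose wandering subspace is $\mathbb C K_A(\cdot,0)$ because $0\notin A$. It forces $K_A(\cdot,0)$ to have no zeros off $A$, so $K_A(x,0)>0$ on $(-1,1)$. Hence no fixed configuration works for all $\alpha>1$, and the construction must depend on $\alpha$ in an essential way.

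Moreover, both degenerations you propose provably lead to positive values.

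\emph{The fallback.} Its target cannot exist: if $D\subset(-1,1)$ is a finite real divisor with all multiplicities even, then $K_D(x,y)>0$ for all real $x,y\notin D$ and every $\alpha>-1$. To see this, first apply a real M\"obius change of variables; it multiplies kernels by a factor $j(x)j(y)>0$ on the real axis, as in \eqref{e25} with $Q=I$. This lets you assume $D\cup\{x,y\}\subset(0,1)$. There $K(s,t)=\sum_n\gamma_n s^nt^n$ with $\gamma_n>0$ is extended totally positive. This follows from Cauchy--Binet and the fact that monomials with increasing exponents form an extended Chebyshev system on $(0,\infty)$. Now write $K_D(x,y)$ as the ratio of the confluent determinant with rows $(x,D)$ and columns $(y,D)$ to the confluent Gram determinant of $D$. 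Sorting rows and columns gives $\operatorname{sign}K_D(x,y)=(-1)^{p(x)+p(y)}=1$, where $p(u)$ counts the points of $D$ below $u$ with multiplicity. By your own continuity argument, conjugate pairs close to real double zeros then also give positive kernels.

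\emph{The first attempt.} The same thing happens here. As $\theta\to\pi$ the pair $\{a,\bar a\}$ collapses to a double zero at $-\rho$. The kernel $K_{-\rho,2}$ of $\{f:f(-\rho)=f'(-\rho)=0\}$ satisfies
\[
K_{-\rho,2}(b,0)=j_{-\rho}(b)\,j_{-\rho}(0)\,F(t),\qquad F(t)=(1-t)^{-2-\alpha}-1-(2+\alpha)t,\qquad t=\frac{\rho(\rho+b)}{1+\rho b}.
\]
Here $F(t)>0$ for $t\neq0$ by convexity. Nothing in your plan explains why also sending $\rho,b\to1$ should reverse this sign.

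Negative values must therefore come from configurations that are not small perturbations of real double zeros. Producing them for every $\alpha>1$ is exactly the nontrivial input the paper imports rather than proves.
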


	We first show that negative values of reproducing
	kernels can lead to the failure of the wandering
	subspace property under suitable equivalent Hilbert norms.
	
		Define the canonical conjugation by
	\[
	(Jf)(z)
	=
	\overline{f(\overline z)}.
	\]
	For $f\in\Hol(\D)$, let $\ord_0 f$ denote
	the order of vanishing of $f$ at the origin.
	\begin{prop}	\label{l1}
		Let $\HH$ be the space of the same analytic functions
		as $A^2_\alpha$ with an equivalent Hilbert norm
		for which $J$ is isometric.
		Suppose $E\subset\D\setminus\mathbb R$
		is a finite conjugation-invariant set
		and the reproducing kernel of
		\[
		I_E^{\HH}
		=
		\{f\in\HH:f|_E=0\}
		\]
		satisfies
		\begin{equation}\label{e13}
			K_E^{\HH}(x,y)<0
			\qquad
			\text{for some }0<x<y<1.
		\end{equation}
		Then for some $m\geq0$ the closed
		$M_z$-invariant subspace
		\begin{equation}\label{e14}
			\mathcal N_m
			=
			\{f\in I_E^{\HH}:\ord_0 f\geq m\}
		\end{equation}
	has a one dimensional wandering subspace
	but fails the wandering subspace property.
	\end{prop}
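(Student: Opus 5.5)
The plan is to decompose $I_E^{\HH}$ into one-dimensional orthogonal pieces indexed by the order of vanishing at $0$, expand the reproducing kernel along this decomposition, and use the negativity \eqref{e13} to force a zero of one of the wandering vectors inside $(0,1)$. That zero is then inherited by the whole cyclic subspace the wandering vector generates.

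\emph{Step 1: each $\mathcal N_m$ has a one-dimensional wandering subspace.} Since $0\notin E$ (because $E\subset\D\setminus\mathbb R$), division by $z$ preserves vanishing on $E$. Hence
\[
z\mathcal N_m=\{f\in I_E^{\HH}:\ord_0 f\geq m+1\}=\mathcal N_{m+1},
\]
and this subspace is closed because the norm is equivalent to $\|\cdot\|_\alpha$, so $M_z$ is bounded below. $\mathcal N_m$ contains a function of exact order $m$, for instance $z^m\prod_{a\in E}(z-a)$. Therefore $\mathcal N_{m+1}$ has codimension one in $\mathcal N_m$: the map $f\mapsto f^{(m)}(0)$ is a nonzero continuous functional on $\mathcal N_m$ with kernel $\mathcal N_{m+1}$. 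So $\mathcal N_m\ominus z\mathcal N_m=\mathbb C G_m$ for a unit vector $G_m$ of exact order $m$.

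\emph{Step 2: normalize $G_m$.} Since $J$ is an isometric conjugation preserving $I_E^{\HH}$ ($E$ is conjugation invariant) and preserving every $\mathcal N_m$, it preserves the line $\mathbb C G_m$. We may therefore rotate $G_m$ so that $JG_m=G_m$, that is, $G_m$ has real Taylor coefficients and is real on $(-1,1)$. Choose the sign so that $G_m(z)=z^m h_m(z)$ with $h_m(0)>0$.

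\emph{Step 3: expand the kernel.} The spaces $\mathbb C G_m$, $m\ge0$, are mutually orthogonal because $G_m\perp\mathcal N_{m+1}\ni G_{m'}$ for $m'>m$. They span a dense subspace of $I_E^{\HH}$: if $f\perp G_m$ for all $m$, then inductively $f\in\mathcal N_m$ for every $m$, so $f$ vanishes to infinite order at $0$ and $f=0$. Thus $\{G_m\}$ is an orthonormal basis, and since the $G_m$ are real on the reals,
\[
K_E^{\HH}(x,y)=\sum_{m\geq0}G_m(x)\overline{G_m(y)}=\sum_{m\ge0}G_m(x)G_m(y),
\]
with the series converging because it is the standard basis expansion of a reproducing kernel.

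\emph{Step 4: find a zero and conclude.} Suppose no $G_m$ vanished on $(0,1)$. Each $h_m$ is real and positive near $0$, so by continuity it would be positive on all of $(0,1)$. Then every term $G_m(x)G_m(y)$ is positive for $0<x<y<1$, so $K_E^{\HH}(x,y)>0$, contradicting \eqref{e13}. Hence some $G_m$ has a zero $t\in(0,1)$.

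For this $m$, the space $[\mathcal N_m\ominus z\mathcal N_m]_{M_z}$ is the closure of $\{pG_m: p \text{ a polynomial}\}$. By continuity of point evaluation at $t$, every function in it vanishes at $t$. On the other hand, $z^m\prod_{a\in E}(z-a)\in\mathcal N_m$ does not vanish at $t$, because $t\notin E$. So $\mathcal N_m$ fails the wandering subspace property, while its wandering subspace is one-dimensional.

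The main point to check carefully is Step 3: the completeness of $\{G_m\}$ and the use of the orthonormal-basis expansion of the kernel in the equivalent norm. The positivity argument also needs the sign normalization $h_m(0)>0$ to be compatible with $JG_m=G_m$. This is fine, since a real multiple can always be chosen.
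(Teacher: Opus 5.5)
Your proposal is correct and follows essentially the same route as the paper: the codimension-one filtration $z\mathcal N_m=\mathcal N_{m+1}$, the $J$-real normalization of the wandering vectors, the orthonormal-basis expansion of $K_E^{\HH}$, a zero of some $G_m$ in $(0,1)$, and the test function $z^m\prod_{a\in E}(z-a)$. The only cosmetic difference is in locating the zero: the paper picks a term with $g_m(x)g_m(y)<0$ and applies the intermediate value theorem on $(x,y)$, needing only that $g_m$ is real there, whereas you argue by contradiction and use the sign normalization $h_m(0)>0$ to make every term positive.
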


	\begin{proof}
		Since point and derivative evaluations are continuous
		under the equivalent norm, each $\mathcal N_m$ is closed.
		
		We first show that
		\begin{equation}\label{e15}
			z\mathcal N_m=\mathcal N_{m+1}.
		\end{equation}
		The conclusion of
		$z\mathcal N_m\subseteq\mathcal N_{m+1}$ is obvious.
			Conversely, if $f\in\mathcal N_{m+1}$,
		then $f(0)=0$,    then there exists
		a constant $C>0$ such that
		\[
		\|f/z\|_{\HH}
		\leq C
		\|f\|_{\HH}.
		\]
		Since $0\notin E$, the function $f/z$
		vanishes on $E$ and has a zero of order
		at least $m$ at the origin.
		Hence $f/z\in\mathcal N_m$, proving
		\eqref{e15}.
		
		In particular, each $\mathcal N_m$ is
		$M_z$-invariant, and
		\[
		\mathcal N_m\ominus z\mathcal N_m
		=
		\mathcal N_m\ominus\mathcal N_{m+1}.
		\]
		
		Now, consider the continuous linear functional
		\[
		f\longmapsto f^{(m)}(0)
		\]
		on $\mathcal N_m$.
		Its kernel is precisely $\mathcal N_{m+1}$.
	   Since the functional is nonzero,
	it follows from the isomorphism theorem that
	\[
	\dim(\mathcal N_m/\mathcal N_{m+1})=1.
	\]
		Since $\mathcal N_{m+1}$ is closed,
		there exists a unit vector
		$g_m\in\mathcal N_m\ominus\mathcal N_{m+1}$
		such that
		\[
		\mathcal N_m\ominus z\mathcal N_m
		=
		\mathbb C g_m.
		\]
		Multiplying $g_m$ by a unimodular constant,
		we may assume that
		\[
		g_m^{(m)}(0)>0.
		\]

	Since $E$ is invariant under complex conjugation,
	we have
	\[
	J(\mathcal N_m)=\mathcal N_m,
	\qquad m\geq0.
	\]
	As $J$ is an isometric conjugation, it preserves
	the one dimensional subspace
	$\mathcal N_m\ominus\mathcal N_{m+1}$.
	Consequently, there exists $\lambda\in\mathbb T$
	such that
	\[
	Jg_m=\lambda g_m.
	\]
	Taking the $m$th derivative at the origin gives
	\[
	\overline{g_m^{(m)}(0)}
	=
	\lambda g_m^{(m)}(0).
	\]
	Since $g_m^{(m)}(0)>0$, we obtain $\lambda=1$.
	Thus
	\begin{equation}\label{e16}
		Jg_m=g_m,
		\qquad
		g_m(t)\in\mathbb R
		\quad (-1<t<1).
	\end{equation}
	
We next show that $\{g_m\}_{m\geq0}$ is an
orthonormal basis for $\mathcal N_0$.
By \eqref{e15}, we have
$\mathcal N_m=M_z^m\mathcal N_0$ for every $m\geq0$.
Since $M_z$ is analytic,
\[
\bigcap_{m\geq0}\mathcal N_m=\{0\}.
\]
Consequently, the orthogonal decompositions
\[
\mathcal N_m
=\mathbb Cg_m\oplus\mathcal N_{m+1},
\qquad m\geq0,
\]
yield
\[
\mathcal N_0
=\bigoplus_{m=0}^{\infty}\mathbb Cg_m.
\]
Since $\|g_m\|_{\HH}=1$, the family
$\{g_m\}_{m\geq0}$ is an orthonormal basis
for $\mathcal N_0$.
	
	It follows from the orthonormal basis expansion
	of the reproducing kernel that
	\begin{equation}\label{e161}
		K_E^{\HH}(z,w)
		=
		\sum_{m=0}^{\infty}
		g_m(z)\overline{g_m(w)},
		\qquad z,w\in\D.
	\end{equation}
	The series converges absolutely for each
	$(z,w)\in\D\times\D$.
	
	In particular, for $x,y\in(-1,1)$,
	\eqref{e16} and \eqref{e161} give that
	\[
	K_E^{\HH}(x,y)
	=
	\sum_{m=0}^{\infty}g_m(x)g_m(y)
	\]
 is real. Fix $x,y\in(-1,1)$ satisfies \eqref{e13}, then there exists
$m\geq0$ such that
\[
g_m(x)g_m(y)<0.
\]
By continuity, there exists $c\in(x,y)$
such that $g_m(c)=0$.

Finally, note that
\[
[\mathcal N_m\ominus z\mathcal N_m]_{M_z}
=
\overline{\operatorname{span}}
\{z^n g_m:n\geq0\}.
\]
Therefore, every function
in this closed linear span vanishes at $c$.
		
		On the other hand, the polynomial
		\[
		h_m(z)=z^m\prod_{a\in E}(z-a)
		\]
		belongs to $\mathcal N_m$ and satisfies
		\[
		h_m(c)=c^m\prod_{a\in E}(c-a)\neq0,
		\]
		since $c\in(0,1)$ and $E\cap\mathbb R
		=\varnothing$.
		
		Consequently,
		\[
		[\mathcal N_m\ominus z\mathcal N_m]_{M_z}
		\subsetneq\mathcal N_m.
		\]
	\end{proof}

The preceding proposition shows that negative values
of reproducing kernels can lead to the failure of
the wandering subspace property. We next show that
such kernel negativity persists under equivalent
inner products induced by compact perturbations
of the identity that commute with conjugation.

\begin{lem}\label{l2}
	Let $Q$ be a positive invertible operator on
	$A^2_\alpha$ satisfying
	\begin{equation}\label{e17}
		Q-I \text{ is compact},
		\qquad QJ=JQ.
	\end{equation}
	Equip $A^2_\alpha$ with the equivalent inner product
	\[
	\langle f,g\rangle_Q
	=\langle Qf,g\rangle_\alpha.
	\]
	Then there exist a finite set
	$E\subset\D\setminus\mathbb R$, invariant under
	complex conjugation, and $0<x<y<1$
	such that the reproducing kernel $K_E^Q$
	of $I_E$
	satisfies
	\begin{equation}\label{e18}
		K_E^Q(x,y)<0.
	\end{equation}
\end{lem}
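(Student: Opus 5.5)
The idea is to take the negative kernel value supplied by Theorem~\ref{t3} and transport it, by a real-axis Möbius automorphism, towards the boundary point $1$. There the compact operator $Q-I$ becomes negligible on the normalized reproducing kernels involved. Fix $\alpha>1$ (the setting of this section) and let $A$, $b$ be as in Theorem~\ref{t3}. Since $A$ is conjugation invariant, $JK_A(\cdot,w)=K_A(\cdot,\overline w)$, so $K_A$ is real on $(-1,1)^2$ and symmetric there; hence $K_A(0,b)=K_A(b,0)<0$.

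\emph{Step 1: Möbius transport.} For $0<a<1$ set $\psi_a(z)=(z+a)/(1+az)$ and define
\[
U_af=(f\circ\psi_a^{-1})\,\bigl((\psi_a^{-1})'\bigr)^{(\alpha+2)/2},
\]
the standard unitary on $A^2_\alpha$. The branch of the power is chosen positive on $(-1,1)$, which is possible because $(\psi_a^{-1})'>0$ there. The multiplier is zero-free, so $U_a$ maps $I_A$ onto $I_{E_a}$ with $E_a=\psi_a(A)$. Since $a$ is real, $E_a\subset\D\setminus\mathbb R$ is again conjugation invariant. Unitarity gives the transformation rule
\[
K_{E_a}(\psi_a(z),\psi_a(w))=c_a(z)\,\overline{c_a(w)}\,K_A(z,w),
\]
where $c_a>0$ on $(-1,1)$. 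Put $x=\psi_a(0)=a$ and $y=\psi_a(b)$, so that $0<x<y<1$. Then the normalized quantity
\[
\kappa:=\frac{K_{E_a}(x,y)}{\|K_{E_a}(\cdot,x)\|\,\|K_{E_a}(\cdot,y)\|}
=\frac{K_A(0,b)}{\|K_A(\cdot,0)\|\,\|K_A(\cdot,b)\|}<0
\]
is independent of $a$. Moreover $K_{E_a}(\cdot,y)/\|K_{E_a}(\cdot,y)\|=\lambda\,U_a\bigl(K_A(\cdot,b)/\|K_A(\cdot,b)\|\bigr)$ with $|\lambda|=1$, and the same holds at $x$ with $0$ in place of $b$.

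\emph{Step 2: Comparison of the two kernels.} Write $M=I_{E_a}$ and let $P^Q$ be the $Q$-orthogonal projection onto $M$. For $f\in M$ we have $f(y)=\langle f,K_{E_a}(\cdot,y)\rangle_\alpha=\langle f,Q^{-1}K_{E_a}(\cdot,y)\rangle_Q$. Hence
\[
K^Q_{E_a}(\cdot,y)=P^QQ^{-1}K_{E_a}(\cdot,y)=K_{E_a}(\cdot,y)+P^Q(Q^{-1}-I)K_{E_a}(\cdot,y),
\]
where we used $P^QK_{E_a}(\cdot,y)=K_{E_a}(\cdot,y)$ because this kernel already lies in $M$. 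Evaluating at $x$ via the $Q$-reproducing property gives
\[
\bigl|K^Q_{E_a}(x,y)-K_{E_a}(x,y)\bigr|\le \|(Q^{-1}-I)K_{E_a}(\cdot,y)\|_Q\,\|K^Q_{E_a}(\cdot,x)\|_Q .
\]
Norm equivalence gives $\|K^Q_{E_a}(\cdot,x)\|_Q\le C\|K_{E_a}(\cdot,x)\|_\alpha$, with $C$ depending only on $\|Q^{\pm1}\|$. Dividing by $\|K_{E_a}(\cdot,x)\|\,\|K_{E_a}(\cdot,y)\|$ leaves
\[
\frac{K^Q_{E_a}(x,y)}{\|K_{E_a}(\cdot,x)\|\,\|K_{E_a}(\cdot,y)\|}\ \ge\ \dots\quad\text{more precisely}\quad
\le\ \kappa+C'\,\bigl\|(Q^{-1}-I)U_a h\bigr\|,
\]
where $h=K_A(\cdot,b)/\|K_A(\cdot,b)\|$ is a fixed unit vector.

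\emph{Step 3 (main point): $U_a\to0$ weakly as $a\to1^-$.} This is the step to verify carefully. On polynomials, $\langle U_ah,p\rangle=\langle h,U_a^*p\rangle$, and $U_a^*p=(p\circ\psi_a)(\psi_a')^{(\alpha+2)/2}$. This function tends to $0$ locally uniformly, because $\psi_a'(z)=(1-a^2)/(1+az)^2\to0$ locally uniformly while $p\circ\psi_a$ stays bounded. Together with the uniform bound $\|U_a^*p\|=\|p\|$, this gives weak convergence of $U_a^*p$ to $0$. By density and uniform boundedness of the $U_a$, we conclude $U_a h\rightharpoonup0$.

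Now $Q^{-1}-I=-Q^{-1}(Q-I)$ is compact, so it maps weakly null sequences to norm null ones, and $\|(Q^{-1}-I)U_ah\|\to0$. Choosing $a$ close to $1$ makes the right-hand side of Step 2 smaller than $\kappa/2<0$. This yields $K^Q_{E_a}(x,y)<0$ with $E=E_a$, which is \eqref{e18}. The commutation $QJ=JQ$ is not needed for this estimate; it is used only so that Proposition~\ref{l1} later applies to the $Q$-norm.
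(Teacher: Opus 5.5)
Your Möbius transport, the identity $K^Q_{E_a}(\cdot,y)=K_{E_a}(\cdot,y)+P^Q(Q^{-1}-I)K_{E_a}(\cdot,y)$, the scale-invariant normalization, and the weak nullity of $U_ah$ are all correct. This is essentially the paper's argument with different bookkeeping. The paper conjugates $Q$ to $Q_t=U_t^*QU_t\to I$ (SOT) and works in the fixed space $I_A$ via $K_{A,t}(\cdot,w)=C_t^{-1}K_A(\cdot,w)$. You instead keep $Q$ fixed and move the subspace, which gives an explicit error bound.

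The gap is at the very end. What your Step 2 actually proves is
\[
\left|\frac{K^Q_{E_a}(x,y)}{\|K_{E_a}(\cdot,x)\|_\alpha\,\|K_{E_a}(\cdot,y)\|_\alpha}-\kappa\right|\le C'\,\bigl\|(Q^{-1}-I)U_ah\bigr\|_\alpha .
\]
Your one-sided form ``$\le\kappa+\dots$'' has no meaning for a complex number. The estimate places $K^Q_{E_a}(x,y)$ in a small disc around a negative number, so it gives only $\operatorname{Re}K^Q_{E_a}(x,y)<0$, not $K^Q_{E_a}(x,y)<0$. Without $QJ=JQ$ there is no reason for this value to be real. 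So your closing remark that $QJ=JQ$ ``is not needed for this estimate'' is wrong for the lemma as stated: that hypothesis is exactly what makes the value real.

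The fix is short, and it is what the paper does for $K_{A,t}$. Since $J$ is antiunitary on $A^2_\alpha$ and $QJ=JQ$, you get $\langle Jf,Jg\rangle_Q=\overline{\langle f,g\rangle_Q}$. Because $E_a=\overline{E_a}$, $J$ maps $I_{E_a}$ onto itself. Hence for real $w$ and $f\in I_{E_a}$,
\[
\langle f,JK^Q_{E_a}(\cdot,w)\rangle_Q=\overline{\langle Jf,K^Q_{E_a}(\cdot,w)\rangle_Q}=\overline{(Jf)(w)}=f(w).
\]
So $JK^Q_{E_a}(\cdot,w)=K^Q_{E_a}(\cdot,w)$, and therefore $K^Q_{E_a}$ is real on $(-1,1)^2$. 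With this added, your real-part estimate gives $K^Q_{E_a}(x,y)<0$ and the proof is complete.
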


	\begin{proof}
	For $a\in\D$, define
	\begin{equation}\label{e19}
		\varphi_a(z)
		=
		\frac{a-z}{1-\overline a z},
		\quad
		j_a(z)
		=
		\frac{(1-|a|^2)^{(\alpha+2)/2}}
		{(1-\overline a z)^{\alpha+2}},
		\quad
		U_af=j_a(f\circ\varphi_a).
	\end{equation}
	The standard identities for disk automorphisms
	(see \cite[Chapter~4]{Zhu}) yield
	\[
	(1-|\varphi_a(z)|^2)^\alpha
	|\varphi_a'(z)|^2
	=
	|j_a(z)|^2(1-|z|^2)^\alpha.
	\]
	In what follows, we take $a=t\in(0,1)$. A change of variables shows that
	$U_t$ is an isometry on $A^2_\alpha$.
	Since $\varphi_t\circ\varphi_t=\mathrm{id}$
	and $j_t(j_t\circ\varphi_t)=1$,
	we have $U_t^2=I$.
	Thus $U_t$ is a
	self-adjoint unitary. Since $t$ is real,
	\[
	\varphi_t(\overline z)
	=\overline{\varphi_t(z)},
	\qquad
	j_t(\overline z)
	=\overline{j_t(z)},
	\]
	and hence $U_tJ=JU_t$.
	Moreover,
	\begin{equation}\label{e20}
		j_t(s)>0,\qquad -1<s<1.
	\end{equation}

We next show that $U_tf$ converges weakly to zero as $t\to1$
for every $f\in A^2_\alpha$.
For  polynomial $f$, we have $U_tf\to0$
uniformly on compact subsets of $\D$, since
$j_t\to0$ uniformly on compact subsets of $\D$
and $f\circ\varphi_t$ is uniformly bounded on $\D$.
By Cauchy's integral formula, every Taylor
coefficient of $U_tf$ tends to zero.
Since the monomials form an orthogonal basis
of $A^2_\alpha$ and the operators $U_t$ are unitary,
it follows that $U_tf\to0$ weakly
for every polynomial $f$.
By the density of polynomials, the same conclusion holds for every $f\in A^2_\alpha$.

	Since $Q-I$ is compact and $U_tf\to0$ weakly
	for every $f\in A^2_\alpha$, we obtain
	\begin{equation}\label{e21}
		Q_t:=U_t^*QU_t
		\xrightarrow{\mathrm{SOT}} I
		\qquad \text{as } t\to1.
	\end{equation}
Since  $Q$ is a positive invertible operator, then there exists constants $0<a\leq c<\infty$ such that
	\begin{equation}\label{e22}
		aI\leq Q_t\leq cI,
		\qquad 0<t<1.
	\end{equation}
Moreover, $Q_t$ commute with $J$.
	
	Choose a finite set $A\subset\D\setminus\mathbb R$
	and $b\in(0,1)$ as in Theorem~\ref{t3}, such that
	\[
	K_A(b,0)<0.
	\]
	Let $P_A$ denote the orthogonal projection onto
	$I_A$ in $A^2_\alpha$, and define
	\[
	C_t=P_AQ_t|_{I_A}.
	\]
	By \eqref{e21} and \eqref{e22},
	\[
	C_t\xrightarrow{\mathrm{SOT}} I_{I_A}
	\qquad \text{as } t\to1,
	\]
	with
	\[
	aI_{I_A}\leq C_t\leq cI_{I_A}.
	\]
Since $\|C_t^{-1}\|\leq a^{-1}$, we have
\[
\|(C_t^{-1}-I)f\|_\alpha
\leq a^{-1}\|(I-C_t)f\|_\alpha
\longrightarrow0
\]
for every $f\in I_A$. Hence
\[
C_t^{-1}\xrightarrow{\mathrm{SOT}}I_{I_A}
\qquad \text{as }t\to1.
\]
	
		Let $K_{A,t}$ denote the reproducing kernel of
		$I_A$ with respect to the $Q_t$-inner product.
		For $f\in I_A$, we have
		\[
		\langle f,C_t^{-1}K_A(\cdot,w)\rangle_{Q_t}
		=
		\langle f,K_A(\cdot,w)\rangle_\alpha
		=f(w).
		\]
		Thus
		\begin{equation}\label{e23}
			K_{A,t}(\cdot,w)
			=
			C_t^{-1}K_A(\cdot,w).
		\end{equation}
		The strong convergence $C_t^{-1}\xrightarrow{\mathrm{SOT}}I_{I_A}$ implies
		\begin{equation}\label{e24}
			K_{A,t}(b,0)
			\longrightarrow K_A(b,0)<0.
		\end{equation}
		Moreover, since $JI_A=I_A$ and $Q_tJ=JQ_t$,
		the reproducing property implies that
		$JK_{A,t}(\cdot,w)=K_{A,t}(\cdot,w)$
		for every real $w\in\D$.
		In particular, $K_{A,t}(b,0)$ is real.
		Hence, for $t>b$ sufficiently close to $1$,
		\[
		K_{A,t}(b,0)<0.
		\]
		
		Since $Q_t=U_t^*QU_t$, the operator $U_t$
		is unitary from the $Q_t$-space onto the
		$Q$-space. Moreover, it maps $I_A$
		onto $I_E$, where $E=\varphi_t(A)$.
		The set $E$ is finite, invariant under
		complex conjugation, and disjoint from $\mathbb R$.
		The corresponding reproducing kernels satisfy
		\begin{equation}\label{e25}
			K_E^Q(\xi,\eta)
			=
			j_t(\xi)\overline{j_t(\eta)}
			K_{A,t}\bigl(
			\varphi_t(\xi),\varphi_t(\eta)
			\bigr).
		\end{equation}
		
		Finally, set
		\[
		x=\varphi_t(b)=\frac{t-b}{1-tb},
		\qquad
		y=\varphi_t(0)=t.
		\]
		Then $0<x<y<1$, and by \eqref{e20} and
		\eqref{e25},
		\[
		K_E^Q(x,y)
		=
		j_t(x)j_t(y)K_{A,t}(b,0)<0.
		\]
	\end{proof}

We now combine Proposition~\ref{l1} and Lemma~\ref{l2} to construct invariant subspaces
that fails the wandering subspace property.

For each integer $k\geq1$, let
\[
\E_k
=
\overline{\operatorname{span}}
\{z^{kn}:n\geq0\}
\subset A^2_\alpha.
\]
\begin{thm}\label{t4}
	For every $\alpha>1$ and every integer $k\geq1$
	there is a nonzero closed
	$\mathcal{B}_{\alpha}^k$-invariant subspace
	$\mathcal M\subset\E_k$ such that
	\[
	\dim(
	\mathcal M\ominus \mathcal{B}_{\alpha}^k\mathcal M
	)=1,
	\]
	and
	\[
	[\mathcal M\ominus \mathcal{B}_{\alpha}^k\mathcal M]_{\mathcal{B}_{\alpha}^k}
	\subsetneq\mathcal M.
	\]
\end{thm}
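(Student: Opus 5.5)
The plan is to realize $\mathcal{B}_\alpha^k$ restricted to $\E_k$ as the shift $M_w$ on $A^2_\alpha$ with an equivalent norm. Then Lemma~\ref{l2} and Proposition~\ref{l1} produce the counterexample, which we transport back to $\E_k$.

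\emph{Step 1: the substitution map.} Define $V:A^2_\alpha\to\E_k$ by $(Vf)(z)=f(z^k)$. Then $V$ sends $w^n$ to $z^{kn}$ and satisfies $V M_w=\mathcal{B}_\alpha^k V$. Write $\gamma_n=\|z^n\|_\alpha^2=\frac{\Gamma(n+1)\Gamma(\alpha+2)}{\Gamma(n+\alpha+2)}$. Let $Q$ be the diagonal operator $Qw^n=q_n w^n$ with
\[
q_n=k^{\alpha+1}\,\frac{\gamma_{kn}}{\gamma_n}.
\]
Then $\langle Qf,g\rangle_\alpha=k^{\alpha+1}\langle Vf,Vg\rangle_\alpha$, so $k^{-(\alpha+1)/2}V$ is a unitary map from $(A^2_\alpha,\langle\cdot,\cdot\rangle_Q)$ onto $\E_k$ that intertwines $M_w$ with $\mathcal{B}_\alpha^k$. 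The constant factor $k^{\alpha+1}$ rescales all norms uniformly. It therefore changes neither orthogonal complements nor the signs of reproducing kernels.

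\emph{Step 2: checking the hypotheses of Lemma~\ref{l2}.} Each $q_n$ is positive. Stirling's formula, or $\Gamma(n+1)/\Gamma(n+\alpha+2)\sim n^{-(\alpha+1)}$, gives $\gamma_n\sim\Gamma(\alpha+2)n^{-(\alpha+1)}$. Hence $q_n\to k^{\alpha+1}\cdot k^{-(\alpha+1)}=1$. Since the $q_n$ are positive and converge to $1$, they are bounded above and below. So $Q$ is positive and invertible, and $Q-I$ is a diagonal operator with entries tending to zero, hence compact.

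Because the $q_n$ are real, $Q$ commutes with $J$ on Taylor coefficients. The same fact makes $J$ isometric for $\langle\cdot,\cdot\rangle_Q$. This asymptotic computation is the only place where the weights enter, and I expect it to be the main (though routine) obstacle.

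\emph{Step 3: producing the bad subspace in the $Q$-space.} Lemma~\ref{l2} yields a finite conjugation-invariant set $E\subset\D\setminus\mathbb R$ and points $0<x<y<1$ with $K_E^Q(x,y)<0$. Now apply Proposition~\ref{l1} with $\HH=(A^2_\alpha,\langle\cdot,\cdot\rangle_Q)$. It gives some $m$ such that $\mathcal N_m$ is a closed $M_w$-invariant subspace, $\mathcal N_m\ominus_Q w\mathcal N_m$ is one dimensional, and $[\mathcal N_m\ominus_Q w\mathcal N_m]_{M_w}\subsetneq\mathcal N_m$. The subspace $\mathcal N_m$ is nonzero because it contains the polynomial $h_m$.

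\emph{Step 4: transporting back to $\E_k$.} Set $\mathcal M=V\mathcal N_m\subset\E_k$. This is closed because $V$ is a multiple of a unitary, and it is nonzero. It is $\mathcal{B}_\alpha^k$-invariant because $\mathcal{B}_\alpha^k V=VM_w$.

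The map $V$ preserves orthogonality up to the scalar factor. It therefore sends $\mathcal N_m\ominus_Q w\mathcal N_m$ onto $\mathcal M\ominus\mathcal{B}_\alpha^k\mathcal M$, which gives $\dim(\mathcal M\ominus\mathcal{B}_\alpha^k\mathcal M)=1$. It also sends the proper closed span $[\mathcal N_m\ominus_Q w\mathcal N_m]_{M_w}$ onto $[\mathcal M\ominus\mathcal{B}_\alpha^k\mathcal M]_{\mathcal{B}_\alpha^k}$. Since $V$ is injective, the latter remains a proper subspace of $\mathcal M$, which proves the theorem.
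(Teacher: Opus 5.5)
Your proposal is correct and is essentially the paper's proof: the same diagonal $Q$ with $q_n=k^{\alpha+1}\gamma_{kn}/\gamma_n$, the same substitution map intertwining $M_w$ with $\mathcal{B}_\alpha^k|_{\E_k}$, then Lemma~\ref{l2}, Proposition~\ref{l1}, and transport back (you also make explicit that $J$ is isometric for $\langle\cdot,\cdot\rangle_Q$, which Proposition~\ref{l1} needs and the paper leaves implicit). One harmless slip: from $\langle Qf,g\rangle_\alpha=k^{\alpha+1}\langle Vf,Vg\rangle_\alpha$ the unitary is $k^{(\alpha+1)/2}V$ (the paper's $V_k$), not $k^{-(\alpha+1)/2}V$; since you only use that $V$ is a scalar multiple of a unitary, nothing else changes.
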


\begin{proof}
	For $F(z)=\sum_{n\geq0}a_nz^n$, define
	\begin{equation}\label{e26}
		\HH_{\alpha,k}
		=
		\{F\in\Hol(\D):
		\|F\|_{\alpha,k}<\infty\},
	\end{equation}
	with $\|F\|_{\alpha,k}^2
	=
	k^{\alpha+1}
	\sum_{n\geq0}
	|a_n|^2\|z^{kn}\|_\alpha^2$.
	
	Recall that
	\[
	\|z^n\|_\alpha^2
	=
	\frac{\Gamma(\alpha+2)\Gamma(n+1)}
	{\Gamma(n+\alpha+2)}
	\sim
	\Gamma(\alpha+2)n^{-(\alpha+1)}
	\qquad (n\to\infty).
	\]
	Hence
	\[
	q_n
	:=
	k^{\alpha+1}
	\frac{\|z^{kn}\|_\alpha^2}
	{\|z^n\|_\alpha^2}
	\longrightarrow1.
	\]
	Define $Q$ on $A^2_\alpha$ by
	\[
	Qz^n=q_nz^n,
	\qquad n\geq0.
	\]
	Since $q_n>0$ and $q_n\to1$,
	$Q$ is positive and invertible, and
	$Q-I$ is compact.
	Moreover, $QJ=JQ$, and
	\[
	\langle F,G\rangle_{\alpha,k}
	=
	\langle QF,G\rangle_\alpha.
	\]
	Thus $\HH_{\alpha,k}$ and $A^2_\alpha$
	consist of the same functions with equivalent Hilbert space norms.
	
Define $V_k:\HH_{\alpha,k}\to\E_k$ by
\begin{equation}\label{e30}
	(V_kF)(z)
	=
	k^{(\alpha+1)/2}F(z^k).
\end{equation}
By the definition of $\|\cdot\|_{\alpha,k}$,
\[
\|V_kF\|_\alpha
=
\|F\|_{\alpha,k}.
\]
Thus $V_k$ is an isometry. Since its range is
closed and contains every monomial $z^{kn}$,
$n\geq0$, it follows that $V_k$ is unitary
from $\HH_{\alpha,k}$ onto $\E_k$.

Moreover, for every $F\in\HH_{\alpha,k}$,
we have $V_k(zF)=z^kV_kF$. Hence
\begin{equation}\label{e31}
	V_kM_z=(\mathcal{B}_{\alpha}^k|_{\E_k})V_k.
\end{equation}
Consequently, $M_z$ on $\HH_{\alpha,k}$
is unitarily equivalent to $\mathcal{B}_{\alpha}^k|_{\E_k}$.
	
	Applying Lemma~\ref{l2}, there exist
	a finite conjugation-invariant set
	$E\subset\D\setminus\mathbb R$
	and $0<x<y<1$ such that
	\[
	K_E^{\HH_{\alpha,k}}(x,y)<0.
	\]
By Proposition~\ref{l1}, there exists
$m\geq0$ such that
\[
\mathcal N_m
=
\{F\in\HH_{\alpha,k}:
F|_E=0,\ \ord_0F\geq m\}
\]
is a nonzero closed $M_z$-invariant subspace
satisfying
\[
\dim(\mathcal N_m\ominus z\mathcal N_m)=1,
\qquad
[\mathcal N_m\ominus z\mathcal N_m]_{M_z}
\subsetneq\mathcal N_m.
\]

Set $\mathcal M=V_k\mathcal N_m$.
Since $V_k$ is unitary and satisfies \eqref{e31},
$\mathcal M$ is a nonzero closed
$\mathcal B_\alpha^k$-invariant subspace of $\E_k$.
Moreover,
\[
\mathcal M\ominus\mathcal B_\alpha^k\mathcal M
=
V_k(\mathcal N_m\ominus z\mathcal N_m),
\]
and
\[
[\mathcal M\ominus\mathcal B_\alpha^k\mathcal M]
_{\mathcal B_\alpha^k}
=
V_k[\mathcal N_m\ominus z\mathcal N_m]_{M_z}
\subsetneq\mathcal M.
\]
In particular,
\[
\dim(\mathcal M\ominus\mathcal B_\alpha^k\mathcal M)=1.
\]
\end{proof}

	\begin{remark}
		The compact perturbation argument in the proof of
		Theorem~\ref{t4} does not depend on the particular
		choice of the sequence $(q_n)$.
		In fact, let $(\lambda_n)_{n\geq0}$ be any sequence
		of positive numbers satisfying
		\[
		\frac{\lambda_n}{\|z^n\|_\alpha^2}
		\longrightarrow 1
		\qquad (n\to\infty).
		\]
		Consider the norm
		\[
		\left\|\sum_{n=0}^\infty a_nz^n\right\|_\lambda^2
		=
		\sum_{n=0}^\infty |a_n|^2\lambda_n.
		\]
		The corresponding operator
		\[
		Q_\lambda z^n
		=
		\frac{\lambda_n}{\|z^n\|_\alpha^2}z^n
		\]
		is positive and invertible, satisfies
		$Q_\lambda-I$ is compact, and commutes with $J$.
		Thus the preceding compact perturbation argument
		applies to any such weighted norm.
		The particular choice of $(q_n)$ is needed only
		for the unitary identification with
		$\mathcal{B}_{\alpha}^k|_{\mathcal E_k}$.
	\end{remark}

\begin{proof}[Proof of Theorem~\ref{t1}]
	By Corollary~\ref{c1},  the Beurling-type theorem holds for
	$-1<\alpha\leq1$ and every $k\geq1$.
	Conversely, for each $\alpha>1$ and
	$k\geq1$, Theorem~\ref{t4} provides
	an $\mathcal{B}_{\alpha}^k$-invariant subspace that fails wandering subspace property.
	This completes the proof.
\end{proof}

\section{Failure of the wandering subspace property for finite Blaschke multipliers}\label{s4}

The Beurling-type theorem  in Theorem~\ref{t1}
does not extend to finite Blaschke multipliers.
More precisely, for every $\alpha>-1$, we construct a finite
Blaschke multiplier admitting a cyclic invariant subspace
with a one dimensional wandering subspace but without  wandering subspace property. 

For $a\in\D$, let
\[
b_a(z)=\frac{z-a}{1-\overline a z}=-\varphi_{a}(z),
\qquad
B_{k,a}(z)=b_a(z^k)
=\frac{z^k-a}{1-\overline a z^k}.
\]
We retain the notation $\E_k$, $\HH_{\alpha,k}$,
and $V_k:\HH_{\alpha,k}\to\E_k$ from
Section~\ref{s3}. 
By the definition of
$\HH_{\alpha,k}$,
its reproducing kernel is given by, see \cite[Theorem 4.19]{Zhu},
\begin{equation}\label{e35}
	K_{\alpha,k}(z,w)
	=k^{-(\alpha+1)}
	\sum_{n=0}^\infty
	\frac{(z\overline w)^n}{\|z^{kn}\|_\alpha^2}.
\end{equation}
Since $\HH_{\alpha,k}$ and $A^2_\alpha$
have equivalent norms, every function in
$H^\infty(\D)$ is a bounded multiplier of
$\HH_{\alpha,k}$.

We first determine the wandering subspace of
$M_{b_a}$ on $\HH_{\alpha,k}$ and establish
the cyclicity of the constant function $1$.
\begin{lem}\label{l3}
	Let $\alpha>-1$, $k\geq1$, and $a\in\D$.
	Then
	\begin{equation}\label{e36}
		b_a\HH_{\alpha,k}
		=\{F\in\HH_{\alpha,k}:F(a)=0\}.
	\end{equation}
	Moreover,
	\[
	\HH_{\alpha,k}\ominus b_a\HH_{\alpha,k}
	=\mathbb C K_{\alpha,k}(\cdot,a),
	\]
	and
	\begin{equation}\label{e37}
		\HH_{\alpha,k}=[1]_{M_{b_a}}.
	\end{equation}
\end{lem}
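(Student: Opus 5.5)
We prove the three assertions of Lemma~\ref{l3} in order; the only real work is in \eqref{e37}.

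\emph{Proof of \eqref{e36}.} The inclusion $b_a\HH_{\alpha,k}\subseteq\{F:F(a)=0\}$ is immediate, since $b_a\in H^\infty(\D)$ is a bounded multiplier and $b_a(a)=0$. For the converse, let $F\in\HH_{\alpha,k}$ with $F(a)=0$. Then $G=F/b_a=F\cdot(1-\overline a z)/(z-a)$ is holomorphic on $\D$. We will show $G\in\HH_{\alpha,k}$ by working in $A^2_\alpha$, whose norm is equivalent to that of $\HH_{\alpha,k}$. Fix $r$ with $|a|<r<1$. On the annulus $r<|z|<1$ we have $|G|\le C|F|$, because $|b_a|$ is bounded below there. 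On the disc $|z|\le r$, the maximum principle gives $\sup_{|z|\le r}|G|\le C'\sup_{|z|=r'}|F|$ for a fixed $r'\in(r,1)$, and this supremum is controlled by $\|F\|_\alpha$ through the continuity of point evaluations. Hence $\|G\|_\alpha\le C''\|F\|_\alpha$, so $G\in\HH_{\alpha,k}$, which gives \eqref{e36}. The same estimate shows that $b_a\HH_{\alpha,k}$ is closed.

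\emph{The wandering subspace.} By \eqref{e36}, $b_a\HH_{\alpha,k}$ is the kernel of the evaluation functional at $a$, so its orthogonal complement is spanned by the Riesz representer of that functional, namely $K_{\alpha,k}(\cdot,a)$. This kernel is nonzero because $K_{\alpha,k}(a,a)\ge k^{-(\alpha+1)}/\|1\|_\alpha^2>0$.

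\emph{Cyclicity of $1$.} Here we must show that the polynomials in $b_a$ are dense in $\HH_{\alpha,k}$. Since the norms are equivalent, it suffices to prove density in $A^2_\alpha$, where the unitary $U_a$ of \eqref{e19} is available. We have $b_a\circ\varphi_a=-z$ because $\varphi_a$ is an involution, and $U_a^2=I$ holds for general $a$ by the same identity $j_a(j_a\circ\varphi_a)=1$. Therefore
\[
U_a\bigl(p(b_a)\bigr)=j_a\,p(-z).
\]
As $p$ ranges over polynomials, $p(-z)$ ranges over all polynomials, so it suffices to show that $j_a\cdot\{\text{polynomials}\}$ is dense in $A^2_\alpha$. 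This holds because $j_a$ is bounded and bounded away from zero on $\overline{\D}$ and is holomorphic on a neighbourhood of $\overline\D$; in particular $1/j_a$ is a polynomial times a constant when $\alpha+2$ is an integer, and in general it lies in $H^\infty$ and is a multiplier. Given $f\in A^2_\alpha$, choose polynomials $p_n\to f/j_a$ in $A^2_\alpha$. Then $j_ap_n\to f$, since multiplication by $j_a$ is bounded. Applying $U_a$ again and passing back to the equivalent norm yields \eqref{e37}.

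\emph{Main obstacle.} The main obstacle is this last density step. A possible subtlety is whether $U_a$ is unitary for complex $a$. If that causes trouble, an alternative is to argue directly: if $g\perp b_a^n$ for all $n$, then $g$ annihilates $\overline{\operatorname{span}}\{b_a^n\}$, which is a closed $M_{b_a}$-invariant subspace containing $1$. This subspace is then also $M_z$-invariant, because $z=(b_a+a)/(1+\overline a b_a)$ is a uniform limit of polynomials in $b_a$ on $\overline\D$. It therefore contains all polynomials, which are dense.
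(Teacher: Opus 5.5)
Your proposal is correct, but it splits the work differently from the paper.

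For \eqref{e36}, the paper conjugates by the automorphism unitary $U_a$. Since $U_a(F/b_a)=-(U_aF)/z$ and $(U_aF)(0)=j_a(0)F(a)=0$, division by $b_a$ becomes division by $z$. This gives the explicit bound $\|F/b_a\|_\alpha\le\sqrt{\alpha+2}\,\|F\|_\alpha$, because $\|z^{n-1}\|_\alpha^2/\|z^n\|_\alpha^2=(n+\alpha+1)/n\le\alpha+2$. Your disc/annulus estimate, using the maximum principle and local boundedness of point evaluations, is more elementary and works for any radial weight, at the cost of an inexplicit constant.

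For \eqref{e37} the roles are reversed: the paper uses no automorphism. It writes $z=(b_a+a)/(1+\overline a b_a)$ and notes that the Taylor series of $\bigl((w+a)/(1+\overline a w)\bigr)^n$ converges uniformly on $\overline{\D}$. So each $z^n$ is a uniform limit on $\D$ of polynomials in $b_a$, and hence a norm limit, since multiplier norms are dominated by sup norms. This is exactly the fallback you sketch at the end.

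Your main route transports polynomials in $b_a$ by $U_a$ to $j_a\cdot\{\text{polynomials}\}$ and then uses $1/j_a\in H^\infty$; it is also valid. The worry you raise does not materialize. The identity $1-\overline a\varphi_a(z)=(1-|a|^2)/(1-\overline a z)$ gives $j_a\,(j_a\circ\varphi_a)=1$ for every $a\in\D$, so $U_a$ is a self-inverse isometry and hence unitary. The paper itself uses $U_a$ for nonreal $a$ when proving \eqref{e36}. Your argument identifies the closed span of $\{b_a^n\}$ concretely. However, it depends on the M\"obius invariance of $A^2_\alpha$, whereas the paper's cyclicity argument needs only two facts: bounded analytic functions act as multipliers with norm controlled by the sup norm, and polynomials are dense.
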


\begin{proof}
	The inclusion from left to right in \eqref{e36}
	is immediate. Conversely, suppose that $F(a)=0$.
	By the unitary operator $U_a$ introduced
	in Section~\ref{s3}, we have
	\[
	U_a(F/b_a)=-(U_aF)/z,
	\qquad
	(U_aF)(0)=0.
	\]
	 It follows that
	\[
	\left\|\frac{F}{b_a}\right\|_\alpha
	\leq \sqrt{\alpha+2}\,\|F\|_\alpha.
	\]
	By norm equivalence,
	$F/b_a\in\HH_{\alpha,k}$.
	This proves \eqref{e36}. The assertion concerning
	the orthogonal complement follows from the
	reproducing property of $K_{\alpha,k}$.
	
	To prove \eqref{e37}, observe that
	\[
	z=\frac{b_a(z)+a}{1+\overline a b_a(z)}.
	\]
	Since $|a|<1$, for every $n\geq1$,
	the Taylor series of
	\[
	\left(\frac{w+a}{1+\overline a w}\right)^n
	\]
	converges uniformly on the closed unit disk.
	Consequently, each monomial $z^n$ is a uniform
	limit on $\D$ of polynomials in $b_a$.
	By norm equivalence, this convergence also
	holds in $\HH_{\alpha,k}$.
	Since polynomials are dense in $\HH_{\alpha,k}$,
	we obtain \eqref{e37}.
\end{proof}

We now use the reproducing kernel formula
\eqref{e35} to construct zeros that lead to
the failure of the wandering subspace property
for finite Blaschke multipliers.

\begin{thm}\label{t5}
	For every $\alpha>-1$, there exists an integer
	$N_\alpha\geq1$ such that, for every $k\geq N_\alpha$,
	one can choose $a\in(0,1)$ for which the finite
	Blaschke product
	\[
	B_{k,a}(z)=\frac{z^k-a}{1-az^k}
	\]
	admits a closed invariant subspace $\E_k$ satisfying
	\[
	\E_k=[1]_{M_{B_{k,a}}},
	\qquad
	\dim(\E_k\ominus B_{k,a}\E_k)=1,
	\]
	but
	\begin{equation}\label{e38}
		[\E_k\ominus B_{k,a}\E_k]_{M_{B_{k,a}}}
		\subsetneq\E_k.
	\end{equation}
\end{thm}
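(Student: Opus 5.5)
The plan is to transport everything to $\HH_{\alpha,k}$ through the unitary $V_k$ of Section~\ref{s3}, use Lemma~\ref{l3} there, and reduce \eqref{e38} to showing that a single reproducing kernel has a zero in $\D$.

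\emph{Step 1: transfer.} For $F\in\HH_{\alpha,k}$ we have
\[
V_k(b_aF)(z)=k^{(\alpha+1)/2}\,b_a(z^k)F(z^k)=B_{k,a}(z)\,(V_kF)(z).
\]
Hence $V_kM_{b_a}=(M_{B_{k,a}}|_{\E_k})V_k$, so $\E_k$ is $M_{B_{k,a}}$-invariant and $M_{B_{k,a}}|_{\E_k}$ is unitarily equivalent to $M_{b_a}$ on $\HH_{\alpha,k}$. Since $V_k1$ is a nonzero constant, Lemma~\ref{l3} gives three facts:
\[
\E_k=[1]_{M_{B_{k,a}}},\qquad
\E_k\ominus B_{k,a}\E_k=\mathbb C\,V_kK_{\alpha,k}(\cdot,a),
\]
and the latter space is one dimensional. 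It then suffices to show that
\[
[K_{\alpha,k}(\cdot,a)]_{M_{b_a}}\subsetneq\HH_{\alpha,k}.
\]
For this I will find $c\in\D$ with $K_{\alpha,k}(c,a)=0$. Point evaluation at $c$ is continuous on $\HH_{\alpha,k}$, so every function in $\overline{\operatorname{span}}\{b_a^nK_{\alpha,k}(\cdot,a):n\geq0\}$ vanishes at $c$. The constant $1$ does not vanish at $c$, which proves the proper inclusion.

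\emph{Step 2: reduction to a one-variable function.} By \eqref{e35}, for real $a$ we have $K_{\alpha,k}(z,a)=k^{-(\alpha+1)}F_k(az)$, where
\[
F_k(w)=\sum_{n\geq0}\frac{w^n}{\|z^{kn}\|_\alpha^2}
=\sum_{n\geq0}\frac{\Gamma(kn+\alpha+2)}{\Gamma(\alpha+2)\Gamma(kn+1)}\,w^n .
\]
If $F_k$ has a zero $w_k$ with $0<|w_k|<1$, I choose $a\in(|w_k|,1)$ and $c=w_k/a\in\D$, and Step 1 applies. So the whole theorem reduces to the following claim: for all $k\geq N_\alpha$, the function $F_k$ has a zero in $\D$.

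\emph{Step 3: zeros of $F_k$ via Hurwitz.} This is the main step. Normalize by $G_k=k^{-(\alpha+1)}F_k$. Its constant term is $k^{-(\alpha+1)}\to0$. For $n\geq1$ its $n$th coefficient is
\[
\frac{\Gamma(kn+\alpha+2)}{\Gamma(\alpha+2)\Gamma(kn+1)(kn)^{\alpha+1}}\;n^{\alpha+1}.
\]
By Stirling, the ratio $\Gamma(m+\alpha+2)/(\Gamma(m+1)m^{\alpha+1})$ tends to $1$ as $m\to\infty$ and is bounded for $m\geq1$. Hence the $n$th coefficient tends to $n^{\alpha+1}/\Gamma(\alpha+2)$, uniformly in $n\geq1$, and is dominated by $Cn^{\alpha+1}$. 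Dominated convergence of the power series then gives, locally uniformly on $\D$,
\[
G_k\to G(w)=\frac{1}{\Gamma(\alpha+2)}\sum_{n\geq1}n^{\alpha+1}w^n .
\]
The limit $G$ has a simple zero at $0$, because $G'(0)=1/\Gamma(\alpha+2)\neq0$. By Hurwitz's theorem, for any small $r>0$ and all large $k$, $G_k$ (equivalently $F_k$) has exactly one zero $w_k$ in $|w|<r$. Moreover $w_k\neq0$ because $F_k(0)=1$. This defines $N_\alpha$ and completes the claim, and hence the theorem.

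The delicate point is making the coefficient convergence uniform enough to apply Hurwitz; the Gamma-ratio bound handles this for every $\alpha>-1$. Note also that $w_k$ need not be real, but this is harmless: only $a$ must be taken in $(0,1)$, and $c=w_k/a$ is allowed to be complex.
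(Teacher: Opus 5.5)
Your proof is correct. Steps 1--2 are the paper's argument: transfer through $V_k$, apply Lemma~\ref{l3}, reduce to a zero of $F_k(w)=\sum_{n\ge0}w^n/\|z^{kn}\|_\alpha^2$ in $\D$, and then choose $a\in(|w_k|,1)$. You differ from the paper only in how the zero of $F_k$ is produced.

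The paper evaluates $F_k$ at the explicit real point $t=-2\|z^k\|_\alpha^2$. There the $n=0$ and $n=1$ terms give $1-2=-1$. The tail is at most $4\|z^k\|_\alpha^2\sum_{n\ge2}n^{\alpha+1}4^{-(n-2)}$, by the elementary product estimate $\|z^k\|_\alpha^2/\|z^{kn}\|_\alpha^2\le n^{\alpha+1}$. The intermediate value theorem then yields a real zero $t_0\in(-2\|z^k\|_\alpha^2,0)$.

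You instead normalize by $k^{-(\alpha+1)}$ and use Gamma-ratio asymptotics to get locally uniform convergence to $G(w)=\Gamma(\alpha+2)^{-1}\sum_{n\ge1}n^{\alpha+1}w^n$. You then apply Hurwitz at the simple zero of $G$ at the origin.

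The paper's route is purely real-variable and needs only the product formula for the norm ratios, not Stirling. It also localizes the zero quantitatively, so $N_\alpha$ could be made explicit. Your route explains structurally why a zero must appear near $0$: the normalized kernels converge to a function vanishing there. It also gives uniqueness of that zero in a small disk, at the cost of a non-explicit threshold.

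Your remark that $w_k$ need not be real undersells your own argument. Since $F_k$ has real coefficients, $\overline{w_k}$ is also a zero in $|w|<r$, so uniqueness forces $w_k\in\mathbb R$. Positivity of the coefficients then forces $w_k<0$. So you recover exactly the kind of real negative zero the paper finds.
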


\begin{proof}
	Fix $\alpha>-1$.
	First note that
	$\|z^k\|_\alpha^2\to0$ as $k\to\infty$.
	Moreover, for $n\geq1$,
	\begin{equation}\label{e39}
		\begin{aligned}
			\frac{\|z^k\|_\alpha^2}
			{\|z^{kn}\|_\alpha^2}
			&=\prod_{j=k+1}^{kn}
			\left(1+\frac{\alpha+1}{j}\right)\\
			&\leq n^{\alpha+1}.
		\end{aligned}
	\end{equation}
	
	For all sufficiently large $k$, we have
	$\|z^k\|_\alpha^2\leq1/8$.
	By \eqref{e39},
	\[
	\begin{aligned}
		&\left|
		\sum_{n=2}^\infty
		\frac{(-2\|z^k\|_\alpha^2)^n}
		{\|z^{kn}\|_\alpha^2}
		\right|\\
		&\qquad\leq
		4\|z^k\|_\alpha^2
		\sum_{n=2}^\infty
		n^{\alpha+1}4^{-(n-2)}
		\longrightarrow0
		\qquad(k\to\infty).
	\end{aligned}
	\]
	Consequently, there exists an integer
	$N_\alpha\geq1$ such that, for every
	$k\geq N_\alpha$,
	\[
	\sum_{n=0}^\infty
	\frac{(-2\|z^k\|_\alpha^2)^n}
	{\|z^{kn}\|_\alpha^2}<0.
	\]
	On the other hand,
	\[
	\sum_{n=0}^\infty
	\frac{0^n}{\|z^{kn}\|_\alpha^2}=1.
	\]
	By continuity and the intermediate value theorem,
	there exists
	$t_0\in(-2\|z^k\|_\alpha^2,0)$
	such that
	\[
	\sum_{n=0}^\infty
	\frac{t_0^n}{\|z^{kn}\|_\alpha^2}=0.
	\]
	
	Choose $a\in(|t_0|,1)$ and set
	$\xi=t_0/a\in(-1,0)$. By \eqref{e35},
	\begin{equation}\label{e40}
		K_{\alpha,k}(\xi,a)
		=k^{-(\alpha+1)}
		\sum_{n=0}^\infty
		\frac{t_0^n}{\|z^{kn}\|_\alpha^2}
		=0.
	\end{equation}
	
	Since $V_k$ is unitary from
	$\HH_{\alpha,k}$ onto $\E_k$ and
	\[
	V_kM_{b_a}
	=(M_{B_{k,a}}|_{\E_k})V_k,
	\]
	Lemma~\ref{l3} gives
	\[
	\E_k
	=V_k[1]_{M_{b_a}}
	=[V_k1]_{M_{B_{k,a}}}.
	\]
	Since $V_k1=k^{(\alpha+1)/2}1$, it follows that
	\[
	\E_k=[1]_{M_{B_{k,a}}}.
	\]
	Furthermore,
	\[
	\E_k\ominus B_{k,a}\E_k
	=V_k(\HH_{\alpha,k}\ominus b_a\HH_{\alpha,k})
	=\mathbb C V_kK_{\alpha,k}(\cdot,a).
	\]
	
	Choose $z_0\in\D$ such that $z_0^k=\xi$.
	By \eqref{e30} and \eqref{e40},
	\[
	(V_kK_{\alpha,k}(\cdot,a))(z_0)=0.
	\]
	Hence every polynomial multiple of
	$V_kK_{\alpha,k}(\cdot,a)$ by $B_{k,a}$
	vanishes at $z_0$.
	Since evaluation at $z_0$ is bounded on
	$A^2_\alpha$, it follows that
	\[
	f(z_0)=0
	\qquad
	\text{for all } f\in
	[\E_k\ominus B_{k,a}\E_k]_{M_{B_{k,a}}}.
	\]
	Since $1\in\E_k$, the inclusion in
	\eqref{e38} is proper.
\end{proof}

We end  this section with an explicit example exhibiting such an integer $k$.
	\begin{example}\label{ex1}
		Consider the finite Blaschke product
		\[
		B(z)=\frac{z^3-\frac34}{1-\frac34z^3}
		\]
		on the classical Bergman space $A^2(\D)$.
		Let
		\[
		\E_3=\overline{\operatorname{span}}
		\{z^{3n}:n\geq0\}.
		\]
		In the sequel, we show that $\E_3$ is a cyclic $M_B$-invariant
		subspace satisfying
		\[
		\E_3=[1]_{M_B},
		\qquad
		\dim(\E_3\ominus B\E_3)=1,
		\]
		but
		\[
		[\E_3\ominus B\E_3]_{M_B}
		\subsetneq\E_3.
		\]
		
		Set
		$a=3/4$ and $r=a^{1/3}$.
		Since $B=b_a\circ z^3$, Lemma~\ref{l3} and equation
		\[
		V_3M_{b_a}=(M_B|_{\E_3})V_3
		\]
		give
		\[
		\E_3=[1]_{M_B},
		\qquad
		B\E_3=\{f\in\E_3:f(r)=0\}.
		\]
		
		Since $\|z^{3n}\|_0^2=(3n+1)^{-1}$,
		the reproducing kernel of $\E_3$ is
		\[
		\begin{aligned}
			K_{\E_3}(z,w)
			&=\sum_{n=0}^{\infty}
			(3n+1)(z\overline w)^{3n}\\
			&=\frac{1+2(z\overline w)^3}
			{\bigl(1-(z\overline w)^3\bigr)^2}.
		\end{aligned}
		\]
		Consequently,
		\[
		\E_3\ominus B\E_3
		=\mathbb C K_{\E_3}(\cdot,r)
		=\mathbb C g,
		\]
		where
		\[
		g(z)=
		\frac{1+\frac32z^3}
		{\left(1-\frac34z^3\right)^2}.
		\]
		
	Clearly, $g$ has three zeros in $\D$,  and every function in
		\[
		[\E_3\ominus B\E_3]_{M_B}
		=
		\overline{\operatorname{span}}
		\{B^ng:n\geq0\}
		\]
		vanishes at $\sqrt[3]{-\frac{2}{3}}$.
		On the other hand, $1\in\E_3$.
		Therefore,
		\[
		[\E_3\ominus B\E_3]_{M_B}
		\subsetneq\E_3.
		\]
	\end{example}

\medskip\noindent\emph{Remark.}
Theorem~\ref{t5} contrasts with Theorem~\ref{t1}
even on the same invariant subspace. Indeed, for
$-1<\alpha\leq1$, the subspace $\E_k$ is invariant
under both $\mathcal{B}_{\alpha}^k$ and $M_{B_{k,a}}=b_a(\mathcal{B}_{\alpha}^k)$, and
\[
\E_k=[\E_k\ominus\mathcal{B}_{\alpha}^k\E_k]_{\mathcal{B}_{\alpha}^k},
\]
whereas
\[
[\E_k\ominus B_{k,a}\E_k]_{M_{B_{k,a}}}
\subsetneq\E_k.
\]
Thus the wandering subspace property is not
necessarily preserved under Möbius transformations
of the operator.

	\noindent\textbf{AI disclosure.}
	The authors used ChatGPT-6 (OpenAI) to assist with language refinement, exposition, \LaTeX{} preparation,
	and the exploration and checking of selected local mathematical arguments.
   More specifically, the proof of the cyclicity assertion
   $\mathcal H_{\alpha,k}=[1]_{M_{b_a}}$
   in Lemma~\ref{l3} was provided by GPT-6.
	The research questions and main mathematical ideas in
	this article originated with the authors.
	All mathematical statements, proofs, and conclusions were independently
	checked by the authors, who take full responsibility for the content
	of the paper.

\end{document}